\def\singlespace{\def\baselinestretch{1.5}\@normalsize}
\newtheorem{theorem}{{Theorem}}
\newtheorem{lemma}{{Lemma}}
\newtheorem{remark}{{Remark}}
\renewcommand{\baselinestretch}{1.5}
\def\marginnote#1{\setbox0=\vtop{\hsize4pc
\small\raggedright\noindent\baselineskip9pt \rightskip=0.5pc plus
1.5pc #1}\leavevmode \vadjust{\dimen0=\dp0
\kern-\ht0\hbox{\kern-4.00pc\box0}\kern-\dimen0}}
\def\lboxit#1{\vbox{\hrule\hbox{\vrule\kern6pt
\vbox{\kern6pt#1\kern6pt}\kern6pt\vrule}\hrule}}
\def\be{\begin{eqnarray}}
\def\en{\end{eqnarray}}
\def\bes{\begin{eqnarray*}}
\def\ens{\end{eqnarray*}}
\def\toind{\overset{d}{\longrightarrow}}
\def\toinp{\overset{p}{\longrightarrow}}
\begin{document}
\thispagestyle{empty}
\begin{center}
{\Large \textbf{Limit theory for an AR(1) model with intercept and a possible infinite variance}}
\end{center}
\vskip 10 pt \centerline{\sc  Qing Liu$^{1,2}$ and Xiaohui Liu$^{1,2}$}
\begin{footnotetext}
{\hspace*{-0.25 in}
$^1$School of Statistics, Jiangxi University of Finance and Economics, Nanchang, Jiangxi 330013, China\\
$^2$Research Center of Applied Statistics, Jiangxi University of Finance and Economics, Nanchang, Jiangxi 330013, China
}
\end{footnotetext}

{\bf Abstract.} In this paper, we derive the limit distribution of the least squares estimator for an AR(1) model with a non-zero intercept and a possible infinite variance. It turns out that the estimator has a quite different limit for the cases of $|\rho| < 1$, $|\rho| > 1$, and $\rho = 1 + \frac{c}{n^\alpha}$ for some constant $c \in R$ and $\alpha \in (0, 1]$, and whether or not the variance of the model errors is infinite also has a great impact on both the convergence rate and the limit distribution of the estimator.

{\it Key words and phrases}:  Limit distribution; Autoregressive model; Infinite variance.

%{\it AMS 2010 subject classifications}:

\section{Introduction}

As a simple but useful tool, the auto-regression (AR) models have been widely used in economics and many other fields. Among them, the most simplest one is the autoregressive process with order 1, i.e., an AR(1) model, which is usually defined as
\begin{equation}\label{mod:1}
  y_t = \mu + \rho y_{t-1} + e_t, \quad t=1,\cdots,n.
\end{equation}
where $y_0$ is a constant and $e_t$'s are identically and independent distributed random errors with mean zero and finite variance. The process $\{y_t\}$ is i) stationary if $|\rho|<1$ independent of $n$, ii) unit root if $\rho=1$, iii) near unit root if $\rho=1+c/n$ for some nonzero constant $c$, iv) explosive if $|\rho| > 1$ independent of $n$, and v) moderate deviation from a unit root if $\rho=1+c/k_n$ for some nonzero constant $c$ and a sequence $\{k_n\}$ satisfying $k_n\to\infty$ and $k_n/n\to 0$ as $n\to\infty$.

When $\mu = 0$ and the error variance in model \eqref{mod:1} is finite, it is well known in the literature that the least squares estimator for $\rho$ has a quite different limit distribution in cases of stationary, unit root and near unit root; see \cite{Phi1987}. The convergence rate of the correlation coefficient is $\sqrt{n}$, $n$ for cases i)-iii), respectively, and may even be $(1 + c)^n$ in the case of v) for some $c > 0$ as stated in \cite{PM2007}. More studies on this model can be found in \cite{DF1981}, \cite{DR2006}, \cite{Mik2007}, \cite{AG2009, AG2014}, \cite{SS1999}, \cite{CLP2012}, \cite{HLP2016} and references therein.

When $\mu \neq 0$ with finite variance, \cite{WY2015} and \cite{Fei2018} studied the limit theory for the AR(1) for cases of iv) and v), respectively. It is shown that the inclusion of a nonzero intercept may change drastically the large sample properties of the least squares estimator compared to \cite{PM2007}. More recently, \cite{LP2017} studied how to construct a uniform confidence region for $(\mu, \rho)$ regardless of i)-v) based on the empirical likelihood method.

Observe that $e_t$ may have an infinite variance in practice \citep{Phi1990, CT2009}, and most of the aforementioned researches were focused on the case of $e_t$ having a finite variance. In this paper, we consider model \eqref{mod:1} when $\mu \neq 0$ and the variance of $e_t$ may possibly be infinite. We will derive the limit distribution of the least squares estimator of $(\mu, \rho)$ for the following cases:
\begin{itemize}
    \item P1) $|\rho|<1$ independent of $n$;
    \item P2) $|\rho| > 1$ independent of $n$;
    \item P3) $\rho = 1$;
    \item P4) $\rho = 1 + \frac{c}{n}$ for some constant $c \neq 0$;
    \item P5) $\rho = 1 + \frac{c}{n^\alpha}$ for some constants $c < 0$ and $\alpha \in (0, 1)$;
    \item P6) $\rho = 1 + \frac{c}{n^\alpha}$ for some constants $c > 0$ and $\alpha \in (0, 1)$.
\end{itemize}
Since the current paper allows for the inclusion of both the intercept and a possible infinite variance, it can be treated as an extension of the existing literature, i.e., \cite{Phi1987, PM2007, HPW2014, WY2015, Fei2018}, among others.

We organize the rest of this paper as follows. Section 2 provides the methodology and main limit results. Detailed proofs are put in Section 3.

\section{Methodology and main results}

Under model (\ref{mod:1}), by minimizing the sum of squares:
\[
    \sum_{t=1}^n (y_t-\mu-\rho y_{t-1})^2,
\]
with respect to $(\mu, \rho)^\top$, we get the least squares estimator for $(\mu,\rho)^\top$ as follows
\begin{equation}
\left\{\begin{array}{ll}
\hat\mu=\frac{\sum\limits_{s=1}^ny_s\sum\limits_{t=1}^n y_{t-1}^2-\sum\limits_{s=1}^ny_{s-1}\sum\limits_{t=1}^n y_ty_{t-1}}{n\sum\limits_{t=1}^n y_{t-1}^2-\left(\sum\limits_{t=1}^n y_{t-1}\right)^2}\\[4ex]
\hat\rho=\frac{n\sum\limits_{t=1}^n y_ty_{t-1}-\sum\limits_{s=1}^ny_{s-1}\sum\limits_{t=1}^n y_t}{n\sum\limits_{t=1}^n y_{t-1}^2- \left(\sum\limits_{t=1}^n y_{t-1}\right)^2}.
\end{array}\right.
\end{equation}
Here $A^\top$ denotes the transpose of the matrix or vector $A$. In the sequel we will investigate the limit distribution of $(\hat\mu-\mu, \hat\rho-\rho)^\top.$

To derive the limit distribution of this least squares estimator, we follow \cite{PM2007} by assuming that
\begin{itemize}
    \item C1) The innovations $\{e_t\}$ are iid with $E[e_t]=0$;
    \item C2) The process is initialized at $y_0=O_p(1)$.
\end{itemize}
Observing that the variance of $e_t$'s may not exist, we use the slowly varying function $l(x)=E[e_t^2I(|e_t|\leq x)]$ instead as did in \cite{HPW2014} to characterize the dispersion property of the random errors, which is supposed to satisfy
\begin{itemize}
  \item C3) $l(nx)/l(n) \to 1$ as $n \to \infty$ for any $x > 0$.
\end{itemize}
An example of slowly varying function is when $l(x)$ has a limit, say $\lim\limits_{x\to\infty}l(x)=\sigma^2$, which implies $\{e_t\}$ having a finite variance $\sigma^2$. Another example is $l(x)=\log(x)$, $x>1$, which implies that the variance of $e_t$'s does not exist. One known property of $l(x)$ is that $l(x)=o(x^{\varepsilon})$ as $x\to\infty$, for any $\varepsilon>0$. More properties on $l(x)$ can be found in \cite{EKM1997}. To deal with the possibly infinite variance, we introcude the following sequence $\{b_k\}_{k=0}^\infty$, where
\[b_0=\inf\{ x\geq 1: l(x)>0\}\]
and
\[b_j=\inf\left\{s: s\geq b_0+1, \frac{l(s)}{s^2}\leq \frac1j\right\},~\text{for}~j=1,2,\ldots,n,\]
which imply directly $n l(b_n)\leq b_n^2$ for all $n\geq 1$; see also \cite{GGM1997}.

For convenience, in the sequel we still call $|\phi| < 1$ the stationary case, $\rho = 1$ the unit root case, $\phi = 1 + \frac{c}{n}$ for some $c\neq 0$ the near unit root case, $\rho = 1 + \frac{c}{n^\alpha}$ for some $c \neq 0$ the moderate deviation case and $|\rho| > 1$ the explosive case, even when the variance of $v_t$ is infinite. We will divide the theoretical derivations into four separate subsections.

%\bigskip
%\noindent $\diamond$ \emph{Limit theory for the stationary case}
%\bigskip
\subsection{Limit theory for the stationary case}

We first consider the stationary case $|\rho| < 1$, which is independent of $n$. Observe that
\bes
  y_t = \mu + \rho y_{t-1} + e_t = \frac{\mu}{1 - \rho} + \left(y_0 - \frac{\mu}{1 - \rho}\right) \rho^t + \sum_{j=1}^{t} \rho^{t-j} e_j.
\ens
We write $\bar{y}_t = y_t - \frac{\mu}{1 - \rho}$, and then have
\[\bar y_t=\rho \bar y_{t-1}+e_t.\]

To prove the main result for this case, we need the following preliminary lemma.
\begin{lemma}\label{lem:01}
Suppose conditions C1)-C3) hold. Under P1), as $n\to\infty$, we have
\bes
    &&\frac1n \sum_{t=1}^n y_{t-1} \overset{p}{\longrightarrow} \frac\mu{1-\rho},\\
    &&\frac1{nl(b_n)} \sum_{t=1}^n y_{t-1}^2  \overset{p}{\longrightarrow}
    \begin{cases}
        \frac1{1-\rho^2},&~\text{if}~\lim\limits_{m\to\infty}l(b_m)=\infty,\\
        \frac1{1-\rho^2}+\frac{\mu^2}{\sigma^2(1-\rho)^2},&~\text{if}~\lim\limits_{m\to\infty}l(b_m)=\sigma^2,
    \end{cases}
\ens
and
\be
\label{eqn:lem1part3}
  \begin{pmatrix}
    \frac1{\sqrt{nl(b_n)}}\sum\limits_{t=1}^n e_t\\[2ex]
    \frac1{\sqrt{n}l(b_n)}\sum\limits_{t=1}^n \bar y_{t-1}e_t
  \end{pmatrix}
   \overset{d}{\longrightarrow}
  \begin{pmatrix}
    W_1\\
    W_2
  \end{pmatrix} \sim N\left(\begin{pmatrix}0\\0
    \end{pmatrix},
    \begin{pmatrix}1\quad0\\~0\quad\frac1{1-\rho^2}
    \end{pmatrix}
    \right).
\en
\end{lemma}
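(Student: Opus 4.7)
The plan is to centre the process via $\bar{y}_t := y_t - \mu/(1-\rho)$ (giving the driftless recursion $\bar{y}_t = \rho\bar{y}_{t-1} + e_t$) and to work throughout with the truncation cutoff $b_n$, exploiting $nl(b_n) \asymp b_n^2$ together with the fact that slow variation forces $b_n = o(n^{1/2+\varepsilon})$ for every $\varepsilon > 0$.

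For the first convergence, expand $y_{t-1} = \mu/(1-\rho) + \rho^{t-1}(y_0 - \mu/(1-\rho)) + \sum_{j=1}^{t-1}\rho^{t-1-j} e_j$ and swap the order of summation to get $\frac{1}{n}\sum_t y_{t-1} = \mu/(1-\rho) + O(1/n) + \frac{1}{n(1-\rho)}\sum_j (1-\rho^{n-j})e_j$. Since each coefficient on $e_j$ is uniformly bounded and a truncated WLLN yields $\sum e_j = O_p(\sqrt{nl(b_n)}) = o_p(n)$, the remainder is $o_p(1)$. For the second convergence, decompose $y_{t-1}^2 = [\mu/(1-\rho)]^2 + 2[\mu/(1-\rho)]\bar{y}_{t-1} + \bar{y}_{t-1}^2$; after dividing by $nl(b_n)$, the constant piece gives $\mu^2/[(1-\rho)^2 l(b_n)]$, producing the two-case dichotomy, while the cross piece is $O_p((nl(b_n))^{-1/2})$ by part~1. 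For the main piece, squaring the recursion yields $\bar{y}_t^2 = \rho^2\bar{y}_{t-1}^2 + 2\rho\bar{y}_{t-1} e_t + e_t^2$, and telescoping gives $(1-\rho^2)\sum \bar{y}_{t-1}^2 = \sum e_t^2 + 2\rho\sum \bar{y}_{t-1}e_t + \bar{y}_0^2 - \bar{y}_n^2$. The central input is the slowly-varying WLLN $\frac{1}{nl(b_n)}\sum e_t^2 \overset{p}{\to} 1$, obtained by truncating $e_t$ at $b_n$ (so the truncated second moment equals $l(b_n)$) and using $nP(|e_1|>b_n) = O(1)$; the martingale cross-term is $O_p(l(b_n)\sqrt{n})$ and $\bar{y}_n^2 = O_p(l(b_n))$ by a truncated variance bound, both negligible after dividing by $nl(b_n)$.

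For the joint CLT, I would apply the Cramér--Wold device to the martingale $Z_n = \sum_t \xi_{tn}$ with increments $\xi_{tn} = \frac{e_t}{\sqrt{nl(b_n)}}\bigl(a + b\bar{y}_{t-1}/\sqrt{l(b_n)}\bigr)$ relative to $\mathcal{F}_t = \sigma(y_0, e_1, \ldots, e_t)$. After truncating each $e_t$ at $b_n$, the conditional variance becomes $\frac{1}{n}\sum_t \bigl(a + b\bar{y}_{t-1}/\sqrt{l(b_n)}\bigr)^2$; expanding the square, the $a^2$ piece contributes $a^2$, the linear term $\tfrac{2ab}{\sqrt{l(b_n)}}\cdot\tfrac{1}{n}\sum \bar{y}_{t-1}$ vanishes in probability (same argument as part~1 applied to $\bar{y}$, since $\sum \bar{y}_{t-1}/n \overset{p}{\to} 0$), and the quadratic term $\tfrac{b^2}{l(b_n)}\cdot\tfrac{1}{n}\sum \bar{y}_{t-1}^2 \overset{p}{\to} b^2/(1-\rho^2)$ by part~2 applied to the centered process (the extra $\mu^2/[\sigma^2(1-\rho)^2]$ piece in the stated limit came from the deterministic shift and is absent for $\bar{y}$). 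The Lindeberg condition follows automatically from the $b_n$ truncation combined with slow variation, and the truncation remainder is controlled by $n E[e_1^2 I(|e_1|>b_n)] = o(nl(b_n))$. The Hall--Heyde martingale CLT then gives $Z_n \overset{d}{\to} N(0, a^2 + b^2/(1-\rho^2))$, which is the bivariate limit with diagonal covariance.

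The principal technical hurdle is the systematic use of truncation: since $E[e_t^2]$ may be infinite, every variance or Chebyshev estimate must be replaced by its truncated version at level $b_n$, and one must verify that both the conditional variance limit and the Lindeberg condition survive that replacement. The telescoping identity in part~2 and the Cram\'er--Wold reduction in part~3 are designed precisely to avoid direct unconditional moment calculations on $\bar{y}_t$; once those workarounds are in place, the argument follows the same template as the finite-variance proof in \cite{PM2007}.
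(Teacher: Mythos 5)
Your proposal is correct and follows essentially the same route as the paper: truncation of $e_t$ at $b_n$ exploiting $nl(b_n)\le b_n^2$, the Gin\'{e}--G\"{o}tze--Mason law $\sum_{t=1}^n e_t^2/(nl(b_n))\toinp 1$ as the key input, a recursion identity for $\sum_{t=1}^n\bar y_{t-1}^2$, and a Cram\'{e}r--Wold plus martingale-CLT argument for the joint limit --- your telescoped identity is just the paper's $2\times 2$ system collapsed into one equation, and your ordering (second-moment limits before the CLT) has the mild advantage of making the conditional-variance verification self-contained where the paper leaves it implicit. The one slip is the claim $nP(|e_1|>b_n)=O(1)$, which alone would not control the contribution of the exceedances to $\sum e_t^2$: the fact actually available (and needed) is $x^2P(|e_1|>x)=o(l(x))$, whence $nP(|e_1|>b_n)=o(1)$ by $nl(b_n)\le b_n^2$, though this is precisely the content of the result the paper simply cites from \cite{GGM1997}.
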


Based on Lemma \ref{lem:01}, we can show the following theorem.

\begin{theorem}\label{th:01}
Under conditions C1)-C3), as $n\to\infty$, we have under P1) that
    \[\left(\begin{array}{ccc}
          \sqrt{\frac{n}{l(b_n)}}(\hat\mu-\mu) \\
          \sqrt{n} (\hat\rho-\rho)
        \end{array}
        \right)
   \overset{d}{\longrightarrow} \left( \begin{array}{ccc}
          X_1 \\
          X_2
        \end{array}
        \right); \]
where $X_1= W_1 - \frac{\mu(1+\rho)}{\sigma}W_2$ and $X_2 = (1-\rho^2)W_2$ if $\lim\limits_{m \to \infty} l(b_m) = \sigma^2$, and $X_1 = W_1$ and $X_2 = (1-\rho^2)W_2$ if $\lim\limits_{m \to \infty} l(b_m) = \infty$.
\end{theorem}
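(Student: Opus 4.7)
The plan is to write both estimator errors as ratios over the common denominator $D_n := n\sum_{t=1}^n y_{t-1}^2 - (\sum_{t=1}^n y_{t-1})^2$, recenter the process by its stationary mean via $\bar y_{t-1} = y_{t-1} - \mu/(1-\rho)$, and then apply Lemma \ref{lem:01} term by term. Using the identity $\sum_{t=1}^n y_{t-1}e_t = (\mu/(1-\rho))\sum_{t=1}^n e_t + \sum_{t=1}^n \bar y_{t-1} e_t$, the mean contribution cancels cleanly in the numerator of $\hat\rho-\rho$, leaving
\[
 N_\rho = n\sum_{t=1}^n \bar y_{t-1} e_t - \Bigl(\sum_{t=1}^n \bar y_{t-1}\Bigr)\sum_{t=1}^n e_t,
\]
while the numerator of $\hat\mu-\mu$ collapses, after the same substitution combined with the identity $\sum y_{t-1}^2 - (\mu/(1-\rho))\sum y_{t-1} = \sum y_{t-1}\bar y_{t-1}$, to
\[
 N_\mu = \Bigl(\sum_{t=1}^n y_{t-1}\bar y_{t-1}\Bigr)\sum_{t=1}^n e_t - \Bigl(\sum_{t=1}^n y_{t-1}\Bigr)\sum_{t=1}^n \bar y_{t-1} e_t.
\]

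Next I would read off rates from Lemma \ref{lem:01}, supplemented by one auxiliary estimate. Iterating $\bar y_t = \rho\bar y_{t-1}+e_t$ and swapping the order of summation yields $\sum_{t=1}^n \bar y_{t-1} = (1-\rho)^{-1}\sum_{t=1}^{n-1}e_t + O_p(1) = O_p(\sqrt{n\,l(b_n)})$. Writing $\sum y_{t-1}\bar y_{t-1} = \sum \bar y_{t-1}^2 + (\mu/(1-\rho))\sum\bar y_{t-1}$ and extracting $\sum \bar y_{t-1}^2$ from $\sum y_{t-1}^2$, one obtains $\sum y_{t-1}\bar y_{t-1}/(n l(b_n))\toinp 1/(1-\rho^2)$ in both sub-cases of C3). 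A parallel computation gives the critical cancellation $D_n/(n^2 l(b_n))\toinp 1/(1-\rho^2)$ uniformly in the two regimes: when $l(b_n)\to \sigma^2$, the extra mass $\mu^2/(\sigma^2(1-\rho)^2)$ inside the limit of $\sum y_{t-1}^2/(n l(b_n))$ is matched exactly by $(\sum y_{t-1})^2/(n^2 l(b_n))\to \mu^2/(\sigma^2(1-\rho)^2)$, while when $l(b_n)\to\infty$ the squared-mean piece is asymptotically negligible relative to $\sum y_{t-1}^2$.

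Combining these rates with the joint convergence in \eqref{eqn:lem1part3} gives
\[
 \frac{N_\rho}{n^{3/2}\,l(b_n)}\toind W_2, \qquad \frac{N_\mu}{n^{3/2}\,l(b_n)^{3/2}} - \Bigl[\frac{W_1}{1-\rho^2} - \frac{\mu}{(1-\rho)\sqrt{l(b_n)}}W_2\Bigr]\toinp 0,
\]
so that dividing by $D_n$ yields $\sqrt n(\hat\rho-\rho)\toind (1-\rho^2)W_2 = X_2$ and $\sqrt{n/l(b_n)}(\hat\mu-\mu)\toind W_1 - (1+\rho)\mu\,l(b_n)^{-1/2}W_2$. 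The second limit specializes to $W_1 - (1+\rho)\mu\sigma^{-1}W_2$ when $l(b_n)\to \sigma^2$ and to $W_1$ when $l(b_n)\to\infty$, matching both expressions for $X_1$ in the statement. Joint convergence of the two components is automatic since both are linear functionals of the Gaussian pair $(W_1,W_2)$ from Lemma \ref{lem:01}.

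The main obstacle I expect is keeping the bookkeeping consistent across the two regimes of $l(b_m)$: the $W_2$ coefficient $(1+\rho)\mu\,l(b_n)^{-1/2}$ in the $\hat\mu$ limit sits exactly at the boundary between vanishing (unbounded $l$) and surviving (bounded $l$), so any secondary term in $N_\mu$ of order $n^{3/2}l(b_n)$ rather than the leading $n^{3/2}l(b_n)^{3/2}$ --- for example, the cross-product $(\mu/(1-\rho))(\sum\bar y_{t-1})(\sum e_t)$ sitting inside $\sum y_{t-1}\bar y_{t-1}\sum e_t$, or the $y_0$-driven tail in $\sum y_{t-1}$ --- would spuriously inflate $X_1$ in the unbounded-$l$ regime. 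Showing that all such pieces are $o_p$ of the target scale simultaneously in both regimes, using only the slowly varying property $l(nx)/l(n)\to 1$ and the defining bound $n l(b_n)\le b_n^2$, is where the care has to be concentrated.
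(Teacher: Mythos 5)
Your proposal is correct and follows essentially the same route as the paper: the same recentering $\bar y_{t-1}=y_{t-1}-\mu/(1-\rho)$ applied to the numerators $\Delta_1,\Delta_2$ over the common denominator $\Delta_3$, the same cancellation of the squared-mean term in $\Delta_3$, and the same term-by-term application of Lemma \ref{lem:01} with Slutsky's theorem. Your bookkeeping of the $W_2$ coefficient, which carries the factor $\mu(1+\rho)l(b_n)^{-1/2}\to\mu(1+\rho)/\sigma$, actually matches the theorem statement more cleanly than the paper's own intermediate display for $\Delta_1/(nl(b_n))^{3/2}$ (which writes $\sigma^2$ where $\sigma$ is meant).
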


\begin{remark}
  Theorem \ref{th:01} indicates that the possible infinite variance may affect both the convergence rate and the limit distribution of the least squares estimator of the intercept, but has no impact on those of $\rho$.
\end{remark}

\begin{remark}
  When $\lim\limits_{m\to \infty} l(b_m)$ exists and is equal to $\sigma^2$, we have $(X_1, X_2)^\top \sim N(0, \Sigma_1)$, where $\Sigma_1 = (\sigma_{ij}^2)_{1\le i,j\le 2}$ with $\sigma_{11}^2 = 1 + \frac{\mu^2(1 + \rho)}{\sigma^4 (1 - \rho)}$, $\sigma_{12}^2 = \sigma_{21}^2 = - \frac{\mu(1+\rho)}{\sigma^2}$ and $\sigma_{22}^2 = 1 - \rho^2$. That is, the limit distribution reduces to the ordinary case; see \cite{LP2017} and references therein for details.
\end{remark}

\subsection{Limit theory for the explosive case}

For this case, let $\tilde{y}_t=\sum_{i=1}^t \rho^{t-i}e_i+\rho^t y_0$, then
\bes
    y_t=\mu\frac{1-\rho^t}{1-\rho}+\rho^t y_0+\sum_{i=1}^t \rho^{t-i}e_i=\mu\frac{1-\rho^t}{1-\rho}+\tilde{y}_t.
\ens

Along the same line as Section 2.1, we derive a preliminary lemma first as follows.

\begin{lemma} \label{lem:02}
Suppose conditions C1)-C3) hold. Under P2), as $n\to\infty$, we have
\[
\begin{pmatrix}
\frac1{\sqrt{nl(b_n)}}\sum\limits_{t=1}^n e_t\\[2ex]
\frac1{\sqrt{l(b_n)}}\sum\limits_{t=1}^n\rho^{-(n-t)}e_t\\[2ex]
\frac\rho{\sqrt{l(b_n)}}\sum\limits_{t=1}^{n-1}\rho^{-t}e_t+\rho y_0
\end{pmatrix}
\overset{d}{\longrightarrow}
\begin{pmatrix}
W_1\\
U_1\\
U_2
\end{pmatrix},\]
and
\[
\begin{pmatrix}
\rho^{-(n-2)}\{l(b_n)\}^{-1}\sum\limits_{t=1}^n \tilde y_{t-1}e_t\\[2ex]
(\rho^2-1)\rho^{-2(n-1)}\{l(b_n)\}^{-1}\sum\limits_{t=1}^n \tilde y_{t-1}^2
\end{pmatrix}
\overset{d}{\longrightarrow}
\begin{pmatrix}
U_1U_2\\[2ex]
U_2^2
\end{pmatrix},\]
where $U_1 \sim \lim\limits_{m\to \infty} \frac1{\sqrt{l(b_m)}}\sum\limits_{t=1}^m\rho^{-(m-t)}e_t$, $U_2 \sim\rho y_0 + \lim\limits_{m\to \infty} \frac\rho{\sqrt{l(b_m)}}\sum\limits_{t=1}^{m-1}\rho^{-t}e_t$, and $W_1$ are mutually independent random variables. $W_1$ is specified in Lemma \ref{lem:01}.
\end{lemma}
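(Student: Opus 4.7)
The plan is to first establish the joint weak convergence in the first display by exploiting the asymptotic independence of the three sums---each is governed by a distinct segment of the i.i.d.\ sequence $\{e_t\}$---and then to derive the second display via the decomposition $\tilde y_{t-1}=\rho^{t-1}S_{t-1}$ with $S_{t-1}:=y_0+\sum_{i=1}^{t-1}\rho^{-i}e_i$, together with the approximation $S_{n-s-1}\approx S_{n-1}$ inside the dominant sums.

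For the first display, fix $K\geq 1$ and consider the truncations
\[
\frac{1}{\sqrt{nl(b_n)}}\sum_{t=K+1}^{n-K}e_t,\quad \frac{1}{\sqrt{l(b_n)}}\sum_{t=n-K+1}^{n}\rho^{-(n-t)}e_t,\quad \rho y_0+\frac{\rho}{\sqrt{l(b_n)}}\sum_{t=1}^{K}\rho^{-t}e_t.
\]
For $n>2K$ these are measurable with respect to three disjoint blocks of $\{e_t\}$ and are therefore independent at each $n$. The CLT argument already used in Lemma~\ref{lem:01} yields convergence of the first truncation to $W_1$ (the deleted $2K$ terms contribute $O_p(\sqrt{K/n})=o_p(1)$), while the tails of the second and third sums outside the truncation have truncated second moment at most $l(b_n)\sum_{t>K}\rho^{-2t}=O(\rho^{-2K})l(b_n)$ and hence, after the $\sqrt{l(b_n)}^{-1}$ normalization, vanish uniformly in $n$ as $K\to\infty$. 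A standard truncation-plus-passage-to-the-limit argument (e.g.\ Theorem~3.2 of Billingsley, 1999) then produces the joint weak limit $(W_1,U_1,U_2)$ with mutually independent components.

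For the second display, change variables $s=n-t$ to obtain
\[
\sum_{t=1}^{n}\tilde y_{t-1}^2=\rho^{2(n-1)}\sum_{s=0}^{n-1}\rho^{-2s}S_{n-s-1}^2,\qquad \sum_{t=1}^{n}\tilde y_{t-1}e_t=\rho^{n-1}\sum_{s=0}^{n-1}\rho^{-s}S_{n-s-1}e_{n-s}.
\]
Substituting $S_{n-s-1}=S_{n-1}-(S_{n-1}-S_{n-s-1})$, using $\sum_{s=0}^{n-1}\rho^{-2s}\to\rho^2/(\rho^2-1)$ and the identity $\sum_{s=0}^{n-1}\rho^{-s}e_{n-s}=\sum_{t=1}^{n}\rho^{-(n-t)}e_t$, the leading parts reduce to
\[
(\rho^2-1)\rho^{-2(n-1)}l(b_n)^{-1}\sum_{t=1}^{n}\tilde y_{t-1}^2\approx \rho^2\,\frac{S_{n-1}^2}{l(b_n)},\qquad \rho^{-(n-2)}l(b_n)^{-1}\sum_{t=1}^{n}\tilde y_{t-1}e_t\approx \rho\,\frac{S_{n-1}}{\sqrt{l(b_n)}}\cdot\frac{1}{\sqrt{l(b_n)}}\sum_{t=1}^{n}\rho^{-(n-t)}e_t,
\]
which, by the first display together with the continuous mapping theorem, converge to $U_2^2$ and $U_1U_2$ respectively.

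The main obstacle is showing that the replacement $S_{n-s-1}\mapsto S_{n-1}$ is harmless after the exponentially large normalizations. The telescoping difference $S_{n-1}-S_{n-s-1}=\sum_{i=n-s}^{n-1}\rho^{-i}e_i$ has truncated second moment bounded by $l(b_n)\rho^{-2(n-s)}/(1-\rho^{-2})$. Combining this with the geometric weights $\rho^{-s}$ or $\rho^{-2s}$ and splitting the resulting double sums into a diagonal contribution of order $\rho^{-n}\sum_j e_j^2=O_p(n\rho^{-n}l(b_n))$ and a mean-zero off-diagonal part with variance $O(n\rho^{-2n}l(b_n)^2)$, both remainders are $o_p(l(b_n))$ after the prescribed normalization and therefore do not affect the limit.
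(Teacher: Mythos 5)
Your overall route is the right one, and it is in fact the route the paper itself takes --- except that the paper outsources nearly all of it. The published proof consists only of the $b_n$-truncation step (showing that replacing $e_t$ by $e_t^{(1)}=e_tI(|e_t|\le b_n)-E[e_tI(|e_t|\le b_n)]$ changes nothing after the stated normalizations) and then cites \cite{And1959} and \cite{WY2015} for the blocking/asymptotic-independence argument and for the decomposition $\tilde y_{t-1}=\rho^{t-1}S_{t-1}$ with $S_{t-1}\approx S_{n-1}$. You have supplied exactly the part the paper omits, and your calculations there are correct: the $O(\rho^{-2K})$ tail bounds for the $U_1$ and $U_2$ blocks, the use of Billingsley's approximation theorem to pass $K\to\infty$, the diagonal remainder $\rho^{-n}\sum_t e_t^2=O_p(n\rho^{-n}l(b_n))$, and the off-diagonal variance $O(n\rho^{-2n}l(b_n)^2)$ all check out and correctly deliver $U_1U_2$ and $U_2^2$ via joint convergence plus continuous mapping.

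The one genuine gap is the converse: you have largely omitted the part the paper actually proves. Every second-moment bound in your argument --- the tail bound $l(b_n)\sum_{t>K}\rho^{-2t}$, the bound $l(b_n)\rho^{-2(n-s)}/(1-\rho^{-2})$ for $S_{n-1}-S_{n-s-1}$, and the variance of the off-diagonal double sum --- is valid only for the truncated errors $e_t^{(1)}$, since $E[e_t^2]$ may be infinite. You acknowledge this by writing ``truncated second moment,'' but you never show that the discarded pieces involving $e_t^{(2)}=e_tI(|e_t|>b_n)-E[e_tI(|e_t|>b_n)]$ are negligible after the normalizations $\{l(b_n)\}^{-1}\rho^{-(n-2)}$ and $\{l(b_n)\}^{-1}\rho^{-2(n-1)}$. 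This cannot be done with second moments; it requires the $L^1$/Markov argument of Lemma \ref{lem:01}, based on $E[|e_t|I(|e_t|>b_n)]=o(l(b_n)b_n^{-1})$ and $nl(b_n)\le b_n^2$, applied to $\sum_t\tilde y_{t-1}^{(k)}e_t^{(j)}$ and $\sum_t\tilde y_{t-1}^{(k)}\tilde y_{t-1}^{(j)}$ for $(k,j)\neq(1,1)$ --- which is precisely the displayed content of the paper's proof of the second part. Adding that step, which transfers essentially verbatim from Lemma \ref{lem:01}, makes your proof complete.
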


Using this lemma, we can obtain the following theorem.

\begin{theorem}\label{th:explosive}
Under conditions C1)-C3), as $n\to\infty$, we have
    \[\left(\begin{array}{ccc}
          \sqrt{\frac n {l(b_n)}}(\hat\mu-\mu) \\
          \rho^n(\hat\rho-\rho)
        \end{array}
        \right)
   \overset{d}{\longrightarrow}
   \left( \begin{array}{ccc}
          W_1 \\
          (\rho^2-1)\frac{U_1}{U_2+\mu\rho/(\rho-1)}
        \end{array}
        \right), \]
   under P2).
\end{theorem}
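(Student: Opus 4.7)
The plan is to derive the two coordinates of the limit separately, using the decomposition $y_{t-1} = \beta(\rho^{t-1}-1) + \tilde y_{t-1}$ with $\beta = \mu/(\rho-1)$ throughout, together with the algebraic identity $\hat\mu - \mu = \bar e - (\hat\rho - \rho)\bar y_-$, where $\bar e = n^{-1}\sum_{t=1}^n e_t$ and $\bar y_- = n^{-1}\sum_{t=1}^n y_{t-1}$, to split the intercept analysis from the slope analysis.

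For $\hat\mu$, Lemma \ref{lem:02} immediately yields $\sqrt{n/l(b_n)}\,\bar e \to W_1$ in distribution. The decomposition together with Lemma \ref{lem:02} gives $\sum_{t=1}^n y_{t-1} = O_p(\rho^n)$, so $\bar y_- = O_p(\rho^n/n)$; combined with the slope analysis carried out next, which yields $\hat\rho - \rho = O_p(\rho^{-n})$, this makes the correction term $\sqrt{n/l(b_n)}(\hat\rho-\rho)\bar y_-$ of order $1/\sqrt{n\,l(b_n)} = o_p(1)$, so that $\sqrt{n/l(b_n)}(\hat\mu - \mu) \to W_1$ by Slutsky.

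For $\hat\rho$, I will start from the closed form $\hat\rho - \rho = [n\sum y_{t-1}e_t - (\sum y_{t-1})(\sum e_t)]/[n\sum y_{t-1}^2 - (\sum y_{t-1})^2]$ and substitute $y_{t-1} = \beta(\rho^{t-1}-1) + \tilde y_{t-1}$. Rate comparisons show that the subtractive pieces $(\sum y_{t-1})^2$ (of order $\rho^{2n}$) and $(\sum y_{t-1})(\sum e_t)$ (of order $\rho^n\sqrt{n\,l(b_n)}$) are strictly dominated by $n\sum y_{t-1}^2$ and $n\sum y_{t-1}e_t$, respectively, so only the latter two need to be evaluated asymptotically. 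Each surviving piece splits into three by the decomposition: a purely deterministic geometric sum, a cross term ($\sum\rho^{t-1}\tilde y_{t-1}$ for the denominator, $\sum\rho^{t-1}e_t$ for the numerator, whose scaled limits involve $U_2$ and $U_1$ by Lemma \ref{lem:02}), and a pure-$\tilde y$ sum ($\sum\tilde y_{t-1}^2$ and $\sum\tilde y_{t-1}e_t$, whose normalized limits $U_2^2$ and $U_1 U_2$ are supplied by Lemma \ref{lem:02}). The key structural observation is that these pieces combine into a perfect square in the denominator, $\sum y_{t-1}^2 \approx \frac{\rho^{2(n-1)}}{\rho^2-1}\bigl[\rho\beta + \sqrt{l(b_n)}\,U_2\bigr]^2$, and into a matching product in the numerator, $\sum y_{t-1}e_t \approx \rho^{n-2}\sqrt{l(b_n)}\,U_1\bigl[\rho\beta + \sqrt{l(b_n)}\,U_2\bigr]$, up to relatively $o_p(1)$ errors. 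Forming the ratio cancels one factor of $\rho\beta + \sqrt{l(b_n)}\,U_2$ and the prefactors collapse to $(\rho^2-1)\rho^{-n}$; multiplying by $\rho^n$ therefore gives $\rho^n(\hat\rho-\rho) \to (\rho^2-1)U_1/(U_2 + \rho\beta)$ by Slutsky.

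Joint convergence of the two coordinates follows from the joint limit of $(W_1, U_1, U_2)$ in Lemma \ref{lem:02} and the stated mutual independence of $W_1$ with $(U_1, U_2)$, via the continuous mapping theorem. The most delicate step is verifying the perfect-square alignment in the denominator: one must check that the cross term $2\beta\sum\rho^{t-1}\tilde y_{t-1}$ contributes exactly the factor $2\rho\beta\sqrt{l(b_n)}\,U_2$ (times the shared prefactor $\rho^{2(n-1)}/(\rho^2-1)$) needed to complete the square with $\beta^2$ and $l(b_n)U_2^2$, and that this same $[\rho\beta+\sqrt{l(b_n)}\,U_2]$ factor reappears multiplicatively in the numerator through the combination of $\beta\sum\rho^{t-1}e_t$ and $\sum\tilde y_{t-1}e_t$. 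Once this algebraic alignment is in place, everything else is routine Slutsky and continuous-mapping bookkeeping.
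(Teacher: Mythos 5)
Your proposal is correct and follows essentially the same route as the paper: both arguments reduce the theorem, via Lemma \ref{lem:02} and Slutsky, to the limits of $\sum y_{t-1}$, $\sum y_{t-1}^2$, $\sum y_{t-1}e_t$ and $\sum e_t$, with your identity $\hat\mu-\mu=\bar e-(\hat\rho-\rho)\bar y_-$ and your perfect-square decomposition being a more explicit rendering of, respectively, the dominance of $\sum y_{t-1}^2\sum_s e_s$ in $\Delta_1$ and the four building-block limits that the paper simply attributes to the arguments of Wang and Yu (2015). The only caveat --- one the paper's own proof shares --- is that in your bracket $\bigl[\rho\beta+\sqrt{l(b_n)}\,U_2\bigr]$ the deterministic drift enters without the $\sqrt{l(b_n)}$ normalization carried by $U_2$, so the cancellation strictly leaves $U_1/\bigl(\rho\beta/\sqrt{l(b_n)}+U_2\bigr)$ rather than literally $U_1/(\rho\beta+U_2)$; reconciling this with the stated limit requires the same implicit convention (distinguishing $\lim_m l(b_m)$ finite from infinite) that the paper adopts when it writes $\rho^{-(n-1)}\{l(b_n)\}^{-1/2}y_n\toind U_2+\mu\rho/(\rho-1)$.
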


\begin{remark}
Similar to the case with finite variance, the least square estimator of the intercept is asymptotically normal, regardless of the error distribution. While the limit distribution of $\hat\rho$ depends on the distribution of $e_t$'s, hence no invariance principle is applicable.
\end{remark}

\begin{remark}
Theorem \ref{th:explosive} indicates that the possible infinite variance only affects the convergence rate of $\hat\mu$. The joint limit distribution reduces to that obtained in \cite{WY2015} if $\lim\limits_{m\to\infty} l(b_m)$ is finite.
\end{remark}

\subsection{Limit theory for the unit root and near unit root cases}

In these two cases, $\rho=:\rho_n=1+\frac cn, c\in \mathbb{R}$. ($c = 0$ corresponds to $\rho = 1$, i.e., the unit root case.) Let $\tilde{y}_t=\sum_{i=1}^t \rho^{t-i}e_i$, then
\bes
    y_t=\mu+\rho y_{t-1}+e_t=\mu\left(\sum_{j=0}^{t-1}\rho^j\right)+\rho^t y_0+\tilde{y}_t.
\ens
We have the following Lemma.

\begin{lemma}\label{lem:unit}
1) Let $E_n(t)=\frac{\sum\limits_{i=1}^{[ns]}e_i}{\sqrt{nl(b_n)}}$, $s\in[0, 1]$. Then
\[E_n(s) \overset{D}{\longrightarrow} \widetilde W(s),~\text{in}~ D[0, 1]~\text{as}~n\to\infty,\]
where $\{\widetilde W(s), s\ge 0\}$ is a standard Brownian process, $[\cdot]$ is the floor function, and $\overset{D}{\longrightarrow}$ denotes the weak convergence. Moreover, define $J_c(s) = \lim\limits_{a \to c} \frac{1-e^{as}}{-a}$, then as $n\to\infty$, we have under P3) and P4) that
\[\left\{\begin{array}{ll}
&n^{-2}\sum\limits_{t=2}^n\left(\sum\limits_{j=0}^{t-2}\rho^j\right)\rightarrow \int_0^1 J_{c}(s)\, ds,\\[3.5ex]
&n^{-3}\sum\limits_{t=2}^n\left(\sum\limits_{j=0}^{t-2}\rho^j\right)^2\rightarrow \int_0^1 J_{c}^2(s)\, ds,\\[3.5ex]
&n^{-3/2}\sum\limits_{t=2}^n\left(\sum\limits_{j=0}^{t-2}\rho^j\right) \frac{e_t}{\sqrt{l(b_n)}} \overset{d}{\longrightarrow} \int_0^1 J_{c}(s)\, d\widetilde W(s).
\end{array}\right.\]
and in turn
\[\frac{\sum\limits_{k=1}^n\tilde{y}_t^2}{n^2 l(b_n)} \overset{d}{\longrightarrow}  \int_0^1 e^{-2c(1-s)} \widetilde W^2(B_c(s))\,ds,\]
\[\frac{\sum\limits_{k=1}^n\tilde{y}_t}{n^{3/2} \sqrt{l(b_n)}}\overset{d}{\longrightarrow} \int_0^1 e^{-c(1-s)} \widetilde W(B_c(s))\,ds,\]
\[\frac{\sum\limits_{k=1}^n\tilde{y}_{t-1}e_t}{n l(b_n)} \overset{d}{\longrightarrow}  -c\int_0^1 e^{-2c(1-s)}\widetilde W^2(B_c(s))\,ds\
+\frac{\widetilde W^2(B_c(1))}2-\frac12,\]
where $B_c(s)=e^{2c}(e^{-2cs-1})/(-2c)$.
\end{lemma}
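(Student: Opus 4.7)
The backbone is a functional CLT for the normalized partial-sum process $E_n$. Because $l(x)=E[e_t^2 I(|e_t|\le x)]$ is slowly varying and $\{b_n\}$ is chosen so that $n l(b_n)\le b_n^2$, I would first show by a standard truncation argument that replacing $e_t$ with $e_t I(|e_t|\le b_n)$ perturbs the scaled partial-sum process by an $o_p(1)$ amount uniformly in $s\in[0,1]$. The centered truncated sequence has variance asymptotic to $l(b_n)$, so Donsker's invariance principle delivers $E_n(\cdot)\overset{D}{\longrightarrow}\widetilde W(\cdot)$ in $D[0,1]$.

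For the three intermediate limits involving $\sum_{j=0}^{t-2}\rho^j$, I would exploit $\sum_{j=0}^{t-2}\rho^j=(\rho^{t-1}-1)/(\rho-1)$ together with $\rho=1+c/n$ and $\rho^{[ns]}\to e^{cs}$ uniformly on $[0,1]$; this yields $\sum_{j=0}^{t-2}\rho^j = n J_c((t-1)/n)+o(n)$ uniformly in $t\le n$, covering both $c\ne 0$ and $c=0$. Riemann-sum approximation then immediately gives the first two deterministic statements. The stochastic-integral statement follows by summation by parts, which reduces the left-hand side to a continuous functional of $E_n$, combined with continuous mapping; equivalently, one can invoke the martingale CLT, whose conditional-variance condition reduces to the second deterministic limit and whose Lindeberg condition follows from slow variation of $l$ and the definition of $b_n$.

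For the partial-sum process of $\tilde y_t$, I would write $\tilde y_{[ns]}=\sum_{i=1}^{[ns]}\rho^{[ns]-i}e_i$ and use the uniform approximation $\rho^{[ns]-i}\approx e^{c(s-i/n)}$. A summation-by-parts identity then realizes $\tilde y_{[ns]}/\sqrt{n l(b_n)}$ as a continuous functional of $E_n$, so the FCLT and continuous mapping produce
\[
\frac{\tilde y_{[ns]}}{\sqrt{n l(b_n)}}\overset{D}{\longrightarrow}\int_0^s e^{c(s-r)}\,d\widetilde W(r),
\]
which the Dambis--Dubins--Schwarz time change together with Brownian scaling (absorbing the factor $e^{2c}$ into the time argument) rewrites in distribution as $e^{-c(1-s)}\widetilde W(B_c(s))$. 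Continuous mapping applied to $f\mapsto \int_0^1 f^2$ and $f\mapsto \int_0^1 f$ then delivers the stated limits for $\sum \tilde y_t^2$ and $\sum \tilde y_t$. For the cross term I would telescope the identity $\tilde y_t^2=\rho^2\tilde y_{t-1}^2+2\rho\tilde y_{t-1}e_t+e_t^2$ to obtain
\[
2\rho\sum_{t=1}^n \tilde y_{t-1}e_t=(1-\rho^2)\sum_{t=1}^n \tilde y_{t-1}^2 + \tilde y_n^2 - \tilde y_0^2 - \sum_{t=1}^n e_t^2,
\]
divide by $n l(b_n)$, substitute $\rho=1+c/n$ so that $(1-\rho^2)/(2\rho)\sim -c/n$, and invoke the already-proved limit for $\sum \tilde y_{t-1}^2$, the marginal $\tilde y_n^2/(n l(b_n))\overset{d}{\longrightarrow}\widetilde W^2(B_c(1))$, and the weak LLN $\sum_{t=1}^n e_t^2/(n l(b_n))\overset{p}{\longrightarrow} 1$ (itself a consequence of slow variation of $l$).

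\textbf{Main obstacle.} I expect the principal subtlety to lie in the FCLT step: verifying that truncation at $b_n$ makes the tail of $e_t$ asymptotically negligible under the possibly infinite variance, and that the normalized centered truncated partial sums satisfy the Lindeberg--Feller condition at rate $\sqrt{n l(b_n)}$. A secondary nuisance is the time-change identification of the OU-type limit with $e^{-c(1-s)}\widetilde W(B_c(s))$, which is an in-distribution rewriting rather than a pathwise equality. Once both points are settled, everything downstream is Riemann-sum, continuous-mapping, or telescoping-identity machinery standard in the near-unit-root literature.
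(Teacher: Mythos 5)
Your proposal is correct and follows essentially the same route the paper intends: the paper only sketches this lemma, invoking the Chan--Wei (1987) near-unit-root machinery together with the negligibility of the truncated tail term $\frac{1}{n^{3/2}\sqrt{l(b_n)}}\sum_{t=2}^n\bigl(\sum_{j=0}^{t-2}\rho^j\bigr)e_t^{(2)}\overset{p}{\longrightarrow}0$, which is exactly the truncation-at-$b_n$ plus FCTL plus Riemann-sum/continuous-mapping argument you outline. Your telescoping identity for the cross term and the covariance check identifying the OU-type limit with $e^{-c(1-s)}\widetilde W(B_c(s))$ both verify against the stated limits, so no gap.
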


This lemma can be showed easily by using similar techniques as in \cite{CW1987} based on the fact that
\bes
    \frac1{n^{3/2}\sqrt{l(b_n)}}\sum_{t=2}^n\left(\sum_{j=0}^{t-2}\rho^j\right)e_t^{(2)} \overset{p}{\longrightarrow} 0,~n\to\infty.
\ens
Using this lemma, it is easy to check the following theorem.
%\begin{proof}[Proof of Lemma \ref{lem:unit}]
%1) If $\sigma_n=\sigma$, i.e. $\{e_t\}$ has a finite variance, the results hold by the functional central limit theorem of martingale
%difference sequences and the continuous mapping theorem. For the case with infinite variance, $\{e_t^{(1)}/\sqrt{l(b_n)}\}$ is a martingale difference
%sequences with finite variance. Then the results hold following the same theorems by noting that
%$\frac{\sum_{i=1}^{n}e_i^{(2)}}{\sqrt {nl(b_n)}} \toinp 0,$ and
%\[\frac1{n^{3/2}\sqrt{l(b_n)}}\sum_{t=1}^n\left(\sum_{j=0}^{t-2}\rho^j\right)e_t^{(2)} \toinp 0,~n\to\infty.\]

%2) If $\sigma_n=\sigma$, i.e. $\{e_t\}$ has a finite variance, then the conclusion follows from the proof of Lemma 2.3 of
%Chan and Wei (1987). If $\sigma_n=l(b_n)$, which implies $\lim_{n\to\infty}l(b_n)=\infty$. Let $\varepsilon_{n,t}=e_t/\sqrt{l(b_n)}$, then
%$\{\varepsilon_{n,t}\}$ satisfies the conditions (1.2) and (1.3) in Chan and Wei (1987), then the conclusion can be proved by following the same lines as Chan and Wei (1987).
%\end{proof}

\begin{theorem}\label{th:unit}
Under the conditions C1)-C3), as $n\to\infty$, we have
    \[\left(\begin{array}{ccc}
          \sqrt{\frac{n}{l(b_n)}} (\hat\mu-\mu) \\[2ex]
          \sqrt{\frac{n^3}{l(b_n)}} (\hat\rho-\rho)
        \end{array}
        \right)
   \overset{d}{\longrightarrow}
   \left( \begin{array}{ccc}
          Y_1/d \\
          Y_2/(\mu d)
        \end{array}
        \right); \]
under P3) and P4), where
\bes
    d&=&\int_0^1 J_c^2(s)\,ds -\left(\int_0^1 J_c(s)\,ds\right)^2,\\[2ex]
    Y_1&=&\widetilde W(1) \int_0^1 J_c^2(s)\,ds - \int_0^1 J_c(s)\,ds \int_0^1 J_{c}(s)\, d\widetilde W(s),\\[2ex]
    Y_2&=&\int_0^1 J_{c}(s)\, d\widetilde W(s)- \widetilde W(1)\int_0^1 J_c(s)\,ds.
\ens
\end{theorem}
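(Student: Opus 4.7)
The plan is to algebraically rewrite $\hat\mu-\mu$ and $\hat\rho-\rho$ so that the stochastic inputs are exposed. Substituting $y_t=\mu+\rho y_{t-1}+e_t$ into the explicit least squares formulas and simplifying gives, with $D_n := n\sum_{t=1}^n y_{t-1}^2-\bigl(\sum_{t=1}^n y_{t-1}\bigr)^2$,
\[
\hat\rho-\rho=\frac{n\sum_{t=1}^n y_{t-1}e_t-\sum_{t=1}^n y_{t-1}\sum_{t=1}^n e_t}{D_n},\qquad \hat\mu-\mu=\frac{\sum_{t=1}^n y_{t-1}^2\sum_{t=1}^n e_t-\sum_{t=1}^n y_{t-1}\sum_{t=1}^n y_{t-1}e_t}{D_n}.
\]
Thus the problem reduces to determining the joint asymptotic behaviour of $D_n$ and the two numerators at the correct rates.

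Next I would use the decomposition $y_{t-1}=\mu\sum_{j=0}^{t-2}\rho^j+\rho^{t-1}y_0+\tilde y_{t-1}$ recorded above Lemma \ref{lem:unit}. Since $\sum_{j=0}^{t-2}\rho^j\sim n J_c((t-1)/n)$ when $\rho=1+c/n$, the deterministic component is of order $n$, whereas $\tilde y_{t-1}=O_p(\sqrt{nl(b_n)})$ and $\rho^{t-1}y_0=O_p(1)$. Plugging this decomposition into the sums in $D_n$ and in the two numerators, and combining Lemma \ref{lem:unit} with the slowly-varying property $l(b_n)=o(n^\varepsilon)$ to show that every term not coming purely from the $\mu$-piece is of smaller order, I expect
\[
\frac{D_n}{\mu^2 n^4}\ \overset{p}{\longrightarrow}\ d,\qquad \frac{\sum_{t=1}^n e_t}{\sqrt{nl(b_n)}}\ \overset{d}{\longrightarrow}\ \widetilde W(1),\qquad \frac{\sum_{t=1}^n y_{t-1}e_t}{\mu n^{3/2}\sqrt{l(b_n)}}\ \overset{d}{\longrightarrow}\ \int_0^1 J_c(s)\,d\widetilde W(s),
\]
all jointly, because every limit is a functional of the same Brownian motion $\widetilde W$ furnished by Lemma \ref{lem:unit}.

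Combining these ingredients, the numerator of $\hat\rho-\rho$ behaves like $\mu n^{5/2}\sqrt{l(b_n)}\,Y_2$ and the numerator of $\hat\mu-\mu$ like $\mu^2 n^{7/2}\sqrt{l(b_n)}\,Y_1$. Dividing by $D_n\sim \mu^2 n^4 d$, the standardisations $\sqrt{n/l(b_n)}$ and $\sqrt{n^3/l(b_n)}$ exactly cancel the residual factors of $n$ and $l(b_n)$, so a continuous-mapping plus Slutsky argument delivers the joint limit $(Y_1/d,\,Y_2/(\mu d))$ claimed in the theorem. Note that one factor of $\mu$ survives in the denominator of the $\hat\rho$-limit but cancels in the $\hat\mu$-limit, consistent with the asymmetric rates.

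The step I expect to be the main obstacle is the careful accounting of the cross terms that arise from expanding $y_{t-1}^2$, $y_{t-1}$ and $y_{t-1}e_t$ through the three-term decomposition of $y_{t-1}$. The deterministic piece must dominate each sum at its announced rate, which requires uniform control of quantities such as $\sum\tilde y_{t-1}^2$, $\sum\tilde y_{t-1}e_t$, and $\sum\bigl(\sum_{j=0}^{t-2}\rho^j\bigr)\tilde y_{t-1}$. Most of these convergences are already asserted in Lemma \ref{lem:unit}, and the remaining ones follow either by summation by parts or by comparing orders using $l(b_n)=o(n^\varepsilon)$; the verification is largely bookkeeping, but it is the only non-routine portion of the argument.
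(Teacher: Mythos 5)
Your proposal is correct and follows essentially the same route as the paper: it isolates the same numerators and denominator $\Delta_1,\Delta_2,\Delta_3$, uses the decomposition $y_{t-1}=\mu\sum_{j=0}^{t-2}\rho^j+\rho^{t-1}y_0+\tilde y_{t-1}$ so that the deterministic $\mu$-component dominates each sum, and invokes Lemma \ref{lem:unit} together with $l(b_n)=o(n^{\varepsilon})$ to kill the cross terms, arriving at the same normalizations $\sqrt{n^7 l(b_n)}$, $\sqrt{n^5 l(b_n)}$ and $n^4$ before applying Slutsky. The rate bookkeeping and the final cancellation of the $\mu$ factors match the paper's argument exactly.
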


\subsection{Limit theory for the moderate deviation cases}

As stated in \cite{PM2007}, the moderate deviation cases bridge the different convergence rates of cases P1)-P4). That is, the case P5) bridges the stationary case and the near unit root case, while Case P6) bridges the explosive case and the near unit root case. And the derivation of these two cases need to be handled differently, because for the $c > 0$ case central limit theorem for martingales fails to hold. Following \cite{PM2007}, we consider them separately.

The following lemma is useful in deriving the limit distribution of the least square estimator under cases P5)-P6).

\begin{lemma} \label{lem:Munit}
Suppose conditions C1)-C3) hold.
\begin{itemize}
  \item i) Under P5), as $n \to \infty$, we have
\begin{eqnarray*}
  \begin{pmatrix}
    \frac1{\sqrt{n}}\sum\limits_{t=1}^n\frac{e_t}{\sqrt{l(b_n)}}\\
    \frac1{\sqrt{n^\alpha}}\sum\limits_{t=1}^n\frac{\rho^{t-1}e_t}{\sqrt{l(b_n)}}\\
    \frac1{\sqrt{n^\alpha}}\sum\limits_{t=1}^n\frac{\rho^{n-t}e_t}{\sqrt{l(b_n)}}\\
    \frac1{\sqrt{n^{1+\alpha}}}\sum\limits_{t=2}^n \frac{e_t}{\sqrt{l(b_n)}} \sum\limits_{j=1}^{t-1} \frac{\rho^{t-1-j}e_j}{\sqrt{l(b_n)}}
  \end{pmatrix}
  ~\overset{d}{\longrightarrow}~
    \begin{pmatrix}
      V_{11}\\
      V_{12}\\
      V_{13}\\
      V_{14}
    \end{pmatrix} \sim N(0, \Sigma_2),
\end{eqnarray*}
where $\Sigma_2= diag (1, -\frac1{2c},-\frac1{2c},-\frac1{2c})$, which implies that $V_{1i}$'s are independent;

\item ii) Under P6), as $n \to \infty$, we have
\begin{eqnarray*}
  \begin{pmatrix}
    \frac1{\sqrt{n}}\sum\limits_{t=1}^n\frac{e_t}{\sqrt{l(b_n)}}\\
    \frac1{\sqrt{n^\alpha}}\sum\limits_{t=1}^n\frac{\rho^{-t}e_t}{\sqrt{l(b_n)}}\\
    \frac1{\sqrt{n^{\alpha}}}\sum\limits_{t=1}^n\frac{\rho^{t-1-n}e_t}{\sqrt{l(b_n)}}
  \end{pmatrix}
  ~\overset{d}{\longrightarrow}~
    \begin{pmatrix}
      V_{21}\\
      V_{22}\\
      V_{23}
    \end{pmatrix},
\end{eqnarray*}
and
\[
    \frac1{\rho^n n^\alpha}\sum_{t=2}^n \left(\sum_{i=1}^{t-1}\frac{\rho^{t-1-i}e_i}{\sqrt{l(b_n)}}\right)\frac{e_t}{\sqrt{l(b_n)}} \overset{d}{\longrightarrow} V_{22}V_{23},
\]
where $(V_{21}, V_{22}, V_{23})^\top\sim N(0, \Sigma_3)~\text{with}~\Sigma_3=diag(1, \frac1{2c},\frac1{2c})$, which implies that $V_{2i}$'s are independent.
\end{itemize}
\end{lemma}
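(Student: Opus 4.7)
The plan is to prove each marginal convergence by the martingale central limit theorem after truncating the innovations at level $b_n$, to establish asymptotic independence by showing all pairwise covariances vanish, to combine via the Cram\'er--Wold device, and finally to derive the product limit in Part ii) by an algebraic decomposition that isolates a leading factor converging jointly to $V_{22}V_{23}$.

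For the deterministic normalizers, use $\rho = 1 + c/n^\alpha$ and the identity $\sum_{t=1}^n \rho^{2(t-1)} = (\rho^{2n}-1)/(\rho^2-1)$: under P5 we have $\rho^{2n}\to 0$ (since $2cn^{1-\alpha}\to-\infty$) and $\rho^2-1 = 2c/n^\alpha + o(n^{-\alpha})$, giving $n^{-\alpha}\sum_{t=1}^n\rho^{2(t-1)} \to -1/(2c)$; the analogous expansion under P6 yields $n^{-\alpha}\sum_{t=1}^n \rho^{-2t}\to 1/(2c)$. The diagonal entry for $V_{14}$ follows from the iterated identity $\sum_{t=2}^n\sum_{j=1}^{t-1}\rho^{2(t-1-j)}\sim n/(1-\rho^2)\sim -n^{1+\alpha}/(2c)$. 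Off-diagonal covariances acquire an extra factor that is either $n^{(\alpha-1)/2}\to 0$ (when one weight is constant and the other geometric) or, under P5, of order $n^{1-\alpha}\rho^n\to 0$ by the exponential decay of $\rho^n$; under P6 the covariance between the $\rho^{-t}$-sum and the $\rho^{t-1-n}$-sum is of order $n^{1-\alpha}\rho^{-n-1}$, which vanishes for the same reason. This identifies $\Sigma_2 = \mathrm{diag}(1,-1/(2c),-1/(2c),-1/(2c))$ and $\Sigma_3 = \mathrm{diag}(1,1/(2c),1/(2c))$.

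Each component is a martingale with respect to $\mathcal F_t = \sigma(e_1,\dots,e_t)$; the symmetric truncation of $e_t$ at level $b_n$ is harmless since $nP(|e_t|>b_n)\to 0$ and $nl(b_n)\le b_n^2$, and the Lindeberg condition is standard via the slow variation of $l$. Every linear combination $\sum_i\lambda_i V_{ji}$ is again a martingale whose quadratic characteristic converges to $\lambda^\top\Sigma_j\lambda$ by the calculations above, so a standard martingale CLT combined with the Cram\'er--Wold device delivers the joint asymptotic normality in both Parts i) and ii).

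For the product limit under P6, write $\tilde y_{t-1} = \rho^{t-1}S_{t-1}^{(n)}$ with $S_{t-1}^{(n)}=\sum_{i=1}^{t-1}\rho^{-i}e_i$, and split $S_{t-1}^{(n)} = S_n^{(n)} - \sum_{i=t}^n\rho^{-i}e_i$ to obtain
\[
\frac{1}{\rho^n n^\alpha l(b_n)}\sum_{t=2}^n\tilde y_{t-1}e_t = \frac{S_n^{(n)}}{\sqrt{n^\alpha l(b_n)}}\cdot\frac{1}{\sqrt{n^\alpha l(b_n)}}\sum_{t=2}^n\rho^{t-1-n}e_t \;-\; R_n,
\]
where $R_n = \frac{1}{n^\alpha l(b_n)}\sum_{t=2}^n\rho^{t-1-n}\bigl(\sum_{i=t}^n\rho^{-i}e_i\bigr)e_t$. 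The first term converges to $V_{22}V_{23}$ by the continuous mapping theorem applied to the joint limit from Part ii); the residual $R_n$ has mean zero and second moment of order $n^{2-2\alpha}\rho^{-2n}$, which vanishes under P6 because $\rho^{-n}$ decays super-polynomially, so $R_n = o_p(1)$. The main technical obstacle is the simultaneous truncation and Lindeberg bookkeeping across all components and, in Part ii), verifying that $V_{22}$ and $V_{23}$ --- built from the same innovations with weights $\rho^{-t}$ and $\rho^{t-1-n}$ whose product is the constant $\rho^{-n-1}$ --- are indeed asymptotically independent.
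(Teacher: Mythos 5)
Your proposal is correct and follows essentially the same route as the paper: truncation at $b_n$ with Markov-inequality control of the tail parts, the martingale CLT plus Cram\'er--Wold for the joint normality (with the explicit variance computations the paper delegates to \cite{PM2007} and \cite{HPW2014}), and, for part ii), the Phillips--Magdalinos-style factorization of the explosive quadratic form that the paper omits entirely. One small slip: $R_n$ is not exactly mean zero, since the inner sum $\sum_{i=t}^n\rho^{-i}e_i$ contains the $i=t$ term and hence contributes a bias of order $n^{1-\alpha}\rho^{-n-1}$, but this vanishes super-polynomially and is absorbed by the second-moment bound you already give.
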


\begin{theorem}\label{th:Munit}
Suppose conditions C1)-C3) hold.
\begin{itemize}
\item[] 1) Under P5), we have as $n\to\infty$
    \[\left(\begin{array}{ccc}
          a_n(\hat\mu-\mu) \\
          a_nn^\alpha (\hat\rho-\rho)
        \end{array}
        \right)
   \toind
   \left( \begin{array}{ccc}
          \frac{\mu}{cd} \\
          \frac{1}{d}
        \end{array}
        \right)^\top Z; \]
where
\begin{eqnarray*}
  a_n &=& \sqrt{n^\alpha/l(b_n)}I(\alpha>1/2) + \sqrt{n^{1-\alpha}}I(\alpha \le 1/2),\\
  Z &=& \frac{\mu}{c} V_{12} I(\alpha > 1/2) + V_{14} I(\alpha \le 1/2),\\
  d &=& \frac{\mu^2}{-2c^3} I(\alpha > 1/2) + \frac{1}{-2c}I(\alpha \le 1/2),
\end{eqnarray*}
if $\lim\limits_{m\to \infty} l(b_m) = \infty$, and
\begin{eqnarray*}
  a_n &=& n^{\max(\alpha, 1/2) - \alpha/2},\\
  Z &=& \frac{\mu\sigma}{c} V_{12} I(\alpha \ge 1/2) + \sigma^2 V_{14} I(\alpha \le 1/2),\\
  d &=& \frac{\mu^2}{-2c^3} I(\alpha \ge 1/2) + \frac{\sigma^2}{-2c}I(\alpha \le 1/2),
\end{eqnarray*}
if $\lim\limits_{m\to \infty} l(b_m) = \sigma^2$.

\item[] 2) Under P6), we have as $n\to\infty$
\[\left(\begin{array}{ccc}
          \sqrt{\frac n{l(b_n)}}(\hat\mu-\mu) \\
          \sqrt{\frac {n^{3\alpha}}{l(b_n)}}\rho^n(\hat\rho-\rho)
        \end{array}
        \right)
   \toind
   \left( \begin{array}{ccc}
          V_{21} \\
         \frac{2c^2}{\mu}V_{23}
        \end{array}
        \right). \]
\end{itemize}
\end{theorem}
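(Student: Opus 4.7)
My plan is to substitute the explicit representation $y_{t-1} = \mu_* + \rho^{t-1}(y_0 - \mu_*) + \tilde y_{t-1}$, with $\mu_* = \mu/(1-\rho)$ and $\tilde y_t = \sum_{i=1}^{t}\rho^{t-i}e_i$, into the centered forms $\hat\rho - \rho = \sum(y_{t-1}-\bar y)e_t/\sum(y_{t-1}-\bar y)^2$ and $\hat\mu - \mu = \bar e - (\hat\rho-\rho)\bar y$, and then identify the dominant asymptotic terms using Lemma \ref{lem:Munit}. The two parts P5 and P6 must be handled separately because $\rho^n \to 0$ versus $\rho^n \to \infty$ produces qualitatively different dominating terms.

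For part 2) (case P6, $c > 0$), the argument mirrors Theorem \ref{th:explosive}. Since $y_0 - \mu_* \sim \mu n^\alpha/c$ and $\rho^{t-1}$ grows geometrically, the deterministic component $(y_0-\mu_*)\rho^{t-1}$ dominates: Lemma \ref{lem:Munit} ii) gives $\sum \rho^{t-1}e_t \sim \rho^n\sqrt{n^\alpha l(b_n)}\,V_{23}$, hence $\sum y_{t-1}e_t \sim \mu\rho^n n^{3\alpha/2}\sqrt{l(b_n)}\,V_{23}/c$, while $\sum\rho^{2(t-1)}\sim \rho^{2n}n^\alpha/(2c)$ gives $\sum y_{t-1}^2 \sim \mu^2 n^{3\alpha}\rho^{2n}/(2c^3)$. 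The remaining terms $\mu_*\sum e_t$, $\sum\tilde y_{t-1}e_t$, and $(\sum y_{t-1})^2/n$ are of strictly smaller order by direct rate comparisons against Lemma \ref{lem:Munit}. Dividing yields $\sqrt{n^{3\alpha}/l(b_n)}\,\rho^n(\hat\rho-\rho)\to (2c^2/\mu)V_{23}$. For $\hat\mu-\mu=\bar e-(\hat\rho-\rho)\bar y$, one checks $(\hat\rho-\rho)\bar y = O_p(\sqrt{l(b_n)}\,n^{\alpha/2-1})$, which is negligible against $\bar e \asymp\sqrt{l(b_n)/n}$ whenever $\alpha<1$, so the limit of $\sqrt{n/l(b_n)}\,(\hat\mu-\mu)$ is $V_{21}$.

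For part 1) (case P5, $c < 0$, $\rho^n\to 0$), I decompose $y_{t-1}-\bar y = (y_0-\mu_*)(\rho^{t-1}-\bar\rho)+(\tilde y_{t-1}-\bar{\tilde y})$, splitting numerator and denominator of $\hat\rho-\rho$ into an \emph{intercept-driven} piece (involving $\rho^{t-1}$) and a \emph{noise-driven} piece (involving $\tilde y_{t-1}$). The intercept part of $\sum(y_{t-1}-\bar y)^2$ scales like $\mu^2 n^{3\alpha}/(-2c^3)$ and the noise part like $n^{1+\alpha}l(b_n)/(-2c)$; likewise, using Lemma \ref{lem:Munit} i), the intercept part of $\sum(y_{t-1}-\bar y)e_t$ is $\sim \mu n^{3\alpha/2}\sqrt{l(b_n)}\,V_{12}/c$ and the noise part $\sim l(b_n)n^{(1+\alpha)/2}V_{14}$. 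The three sub-cases $\alpha>1/2$, $\alpha=1/2$, $\alpha<1/2$, combined with the dichotomy $l(b_m)\to\infty$ versus $l(b_m)\to\sigma^2$, select which contributions survive and recover the four declared $(a_n,Z,d)$. Finally, $\hat\mu-\mu=\bar e-(\hat\rho-\rho)\bar y$ together with $\bar y\sim -\mu n^\alpha/c$ (whose dominant part comes from $\mu_*$) and the observation that $\bar e$ is negligible against $(\hat\rho-\rho)\bar y$ for $\alpha<1$ yields $a_n(\hat\mu-\mu)\to (\mu/c)\cdot Z/d = \mu Z/(cd)$.

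The main obstacle is the careful bookkeeping at the critical index $\alpha=1/2$. When $l(b_n)\to\sigma^2$, the intercept and noise contributions have the same order in $n$, so both appear in the limits, producing $Z=\mu\sigma V_{12}/c+\sigma^2 V_{14}$ and $d=\mu^2/(-2c^3)+\sigma^2/(-2c)$. When $l(b_n)\to\infty$, the extra factor of $l(b_n)$ in the noise parts promotes them above the intercept parts in both numerator and denominator, so only the noise contribution survives, giving $Z=V_{14}$ and $d=1/(-2c)$. A secondary routine step is to verify that the three classes of cross terms $(y_0-\mu_*)\bar\rho\sum e_t$, $\bar{\tilde y}\sum e_t$, and $(y_0-\mu_*)\sum(\rho^{t-1}-\bar\rho)(\tilde y_{t-1}-\bar{\tilde y})$ are of strictly smaller order; each reduces to a product of quantities whose rates are already supplied by Lemma \ref{lem:Munit}, with Cauchy--Schwarz closing out the last one.
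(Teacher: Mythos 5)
Your proposal is correct and follows essentially the same route as the paper: the paper works with the determinant quantities $\Delta_1,\Delta_2,\Delta_3$ rather than your centered form $\hat\rho-\rho=\sum(y_{t-1}-\bar y)e_t/\sum(y_{t-1}-\bar y)^2$ and $\hat\mu-\mu=\bar e-(\hat\rho-\rho)\bar y$, but the algebra is identical, and your identification of dominant terms, the case split over $\alpha>1/2$ versus $\alpha\le 1/2$ and $l(b_m)\to\infty$ versus $l(b_m)\to\sigma^2$, and the P6 analysis all match the paper's. The one step your sketch glosses over is the rate $\sum_t\tilde y_{t-1}^2\sim n^{1+\alpha}l(b_n)/(-2c)$, which Lemma \ref{lem:Munit} does not supply directly; as in the paper, it follows from the telescoping identity $(1-\rho^2)\sum_t\tilde y_{t-1}^2=\sum_t e_t^2+2\rho\sum_t\tilde y_{t-1}e_t-\tilde y_n^2$ together with $\sum_t e_t^2/(nl(b_n))\toinp 1$.
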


\begin{remark}
  Similar to \cite{Fei2018}, the least squares estimators under P6) are asymptotically normal, in contrast to the Cauchy distribution in \cite{PM2007}.
   Moreover, the joint limit distribution is still degenerated under P5), but slightly differently, we obtain the exact limit distribution in this case.
\end{remark}

\begin{remark}
As can be seen from Theorem \ref{th:Munit}, under P5), the possible infinite variance has no impact on the asymptotic behavior of estimators when $\alpha< \frac12$, but affects the convergence rate when $\alpha>\frac12$, and the limit distribution when $\alpha=\frac12$.
\end{remark}

\begin{remark}
  Under some mild conditions, it is possible to extend the current result under P6) into to the cases that $\rho = 1 + \frac{c}{k_n}$ by using similar arguments for some general sequence $\{k_n\}$ such that $k_n = o(n)$ and $k_n / n \to 0$ as $n \to \infty$ as studied in \cite{PM2007}. But it is impossible to do such an extension under P5) because the derivation of the limit distribution involving the order comparison between $\sqrt{n}$ and $k_n$, while the limit of $\sqrt{n} / k_n$ as $n\to\infty$ is unclear without any further information of $k_n$.
\end{remark}

\section{Detailed proofs of the main results}

In this section, we provide all detailed proofs of the lemmas and theorems stated in Section 2.

\begin{proof}[Proof of Lemma \ref{lem:01}]
To handle the possible infinite invariance, we use the truncated random variables. Let
\begin{equation}
\left\{\begin{array}{ll}
e_t^{(1)}=e_tI(|e_t|\leq b_n)- E[e_tI(|e_t|\leq b_n)],\\[2ex]
e_t^{(2)}=e_tI(|e_t|> b_n)- E[e_tI(|e_t|> b_n)],
\end{array}\right.
\end{equation}
where $I(\cdot)$ denotes the indicative function. The key step is to show that the difference of replacing $e_t$ by $e_t^{(1)}$ in the summations is negligible.

Let $\{\bar y_t^{(1)}\}$ and  $\{\bar y_t^{(2)}\}$ be two time series satisfying
\bes
    \bar y_t^{(k)}=\rho \bar y_{t-1}^{(k)}+ e_t^{(k)}, \quad k=1,2.
\ens
Obviously, $\{e_t^{(1)} / \sqrt{l(b_n)}: t\geq1\}$ are iid, and under P1), it is easy to check that $\{\bar y_{t-1}^{(1)}e_t^{(1)} / l(b_n) : t\geq1\}$ is a martingale differences sequence with respect to $\mathcal{F}_{t-1} =\sigma(\{e_s: s \leq t-1\})$ for $t = 1, 2, \cdots, n$, which satisfy the Lindeberg condition. Hence, by the Cram\'{e}r-Wold device and the central limit theorem for martingale difference sequences, we have
\be
\label{eqn:et10}
    \begin{pmatrix}
    \frac1{\sqrt{nl(b_n)}}\sum\limits_{t=1}^n e_t^{(1)}\\[2ex]
    \frac1{\sqrt{n}l(b_n)}\sum\limits_{t=1}^n \bar y_{t-1}^{(1)} e_t^{(1)}
    \end{pmatrix}
    \overset{d}{\longrightarrow}
    \begin{pmatrix}
    W_1\\
    W_2
    \end{pmatrix}.
\en

Next, under condition C3), it follows from \cite{CSW2003} that
\bes
    E[|e_t|I(|e_t|>b_n)]=o(l(b_n)b_n^{-1}),\quad n\to\infty.
\ens
Then by $nl(b_n)\leq b_n^2$ and the Markov inequality, we have, for any $\varepsilon > 0$,
\begin{eqnarray*}
  P\left(\left|\sum_{t=1}^n e_t^{(2)}\right| \ge \sqrt{nl(b_n)}\varepsilon\right) &\leq& \frac{\sum\limits_{t=1}^n E|e_t^{(2)}|}{\sqrt{nl(b_n)}\varepsilon}\\
  &\leq& 2\frac{\sum\limits_{t=1}^nE[|e_t|I(|e_t|>b_n)]}{\sqrt{nl(b_n)}\varepsilon}\\
  &=& o\left(\frac{\sqrt{nl(b_n)}}{b_n}\right) = o(1), ~ \text{ as } n \to \infty,
\end{eqnarray*}
That is,
\be
\label{eqn:et11}
    \frac1{\sqrt{nl(b_n)}}\sum_{t=1}^n e_t^{(2)}  = o_p(1),\quad n\to\infty,
\en

Furthermore, note that $\bar y_{t-1}^{(k)}=\rho^{t-1}\bar y_0^{(k)}+\sum_{i=1}^{t-1}\rho^{t-1-i}e_i^{(k)},~k=1,2$. By the H\"{o}lder inequality, we have
\[E\left(\left|\frac{e_t^{(1)}}{\sqrt{l(b_n)}}\right| \right) \leq \left\{E\left(\frac{e_t^{(1)}}{\sqrt{l(b_n)}}\right)^2\right\}^{1/2}\leq 1.\]
Using the Markov inequality, we have
\bes
&&P\left(\left|\sum_{t=1}^n \bar y_{t-1}^{(1)}e_t^{(2)}\right| \geq \sqrt{n}l(b_n)\varepsilon\right)\\
&&\leq \frac{1}{\sqrt{n}l(b_n)\varepsilon}\sum_{t=1}^nE(|\bar y_{t-1}^{(1)}|)E(|e_t^{(2)}|)\\
&&\leq \frac{1}{\sqrt{n}l(b_n)\varepsilon}\frac1{1-\rho}\{E(|\bar y_0^{(1)}|)+n E(|e_t^{(1)}|)\}E(|e_t^{(2)}|)\\
&&= o(n^{-1/2}b_n^{-1}) +  o\left(\frac{\sqrt{nl(b_n)}}{b_n}\right)=o(1),
\ens
i.e.,
\be
\label{eqn:et121}
    \frac1{\sqrt{n}l(b_n)}\sum_{t=1}^n \bar y_{t-1}^{(1)}e_t^{(2)} = o_p(1), \quad \text{as } n \to \infty.
\en

Similarly we can show
\be\label{eqn:et1223}
    \frac1{\sqrt{n}l(b_n)}\sum_{t=1}^n \bar y_{t-1}^{(2)}e_t^{(j)} = o_p(1),\quad n\to\infty, ~ j = 1, 2.
\en
This, together with \eqref{eqn:et11}-\eqref{eqn:et121}, shows
\bes
  \begin{pmatrix}
    \frac1{\sqrt{nl(b_n)}}\sum\limits_{t=1}^n e_t\\[2ex]
    \frac1{\sqrt{n}l(b_n)}\sum\limits_{t=1}^n \bar y_{t-1}e_t
  \end{pmatrix}
  =
  \begin{pmatrix}
    \frac1{\sqrt{nl(b_n)}}\sum\limits_{t=1}^n e_t^{(1)}\\[2ex]
    \frac1{\sqrt{n}l(b_n)}\sum\limits_{t=1}^n \bar y_{t-1}^{(1)}e_t^{(1)}
  \end{pmatrix}
  + o_p(1), \quad \text{as } n \to \infty,
\ens
while combined with \eqref{eqn:et10} shows \eqref{eqn:lem1part3}.

Note that $\bar y_t=\rho \bar y_{t-1}+e_t$. Multiplying both sides with $\bar y_t$ and $\bar y_{t-1}$ respectively, and taking summation, we have
\bes
\begin{cases}
  \sum\limits_{t=1}^n \bar y_t^2=\rho\sum\limits_{t=1}^n \bar y_t\bar y_{t-1}+\sum\limits_{t=1}^n \bar y_te_t,\\[3ex]
  \sum\limits_{t=1}^n \bar y_t\bar y_{t-1}=\rho\sum\limits_{t=1}^n \bar y_{t-1}^2+\sum\limits_{t=1}^n \bar y_{t-1}e_t.
\end{cases}
\ens
Since
\[\frac1{\sqrt{n}l(b_n)}\sum_{t=1}^n \bar y_{t-1}e_t \toind W_2,\]
and
\[\sum_{t=1}^n \bar y_te_t=\rho\sum_{t=1}^n \bar y_{t-1}e_t+\sum_{t=1}^n e_t^2,\]
we have
\[\frac{\sum_{t=1}^n \bar y_te_t}{nl(b_n)} \toinp 1,~n\to\infty,\]
by noting that $\frac{\sum_{t=1}^n e_t^2}{nl(b_n)} \toinp 1$ (see (3.4) in \cite{GGM1997}). Hence,
\bes
\left\{\begin{array}{ll}
\frac{\sum\limits_{t=1}^n \bar y_t^2}{nl(b_n)}=\rho\frac{\sum\limits_{t=1}^n \bar y_t\bar y_{t-1}}{nl(b_n)}+1+o_p(1)\\[2ex]
\frac{\sum\limits_{t=1}^n \bar y_t\bar y_{t-1}}{nl(b_n)}=\rho\frac{\sum\limits_{t=1}^n \bar y_{t-1}^2}{nl(b_n)}+o_p(1),
\end{array}\right.
\ens
which implies that as $n\to\infty$
\bes
\left\{\begin{array}{ll}
\frac{\sum\limits_{t=1}^n \bar y_t^2}{nl(b_n)} \toinp \frac1{1-\rho^2}\\[2ex]
\frac{\sum\limits_{t=1}^n \bar y_t\bar y_{t-1}}{nl(b_n)} \toinp \frac\rho{1-\rho^2}.
\end{array}\right.
\ens

Note that
\bes
    \frac1n \sum_{t=1}^n y_{t-1} &=& \frac1n \sum_{t=1}^n \bar y_{t-1}+\frac\mu{1-\rho} = \frac\mu{1-\rho} + o_p(1),%\\
   %&=& \frac{1}{1 - \rho} \frac1n (\bar y_0 - \bar y_n) + \frac{1}{n} \sum\limits_{t=1}^n e_t.
\ens
and
\[
    \frac1{nl(b_n)} \sum_{t=1}^n y_{t-1}^2=\frac1{nl(b_n)} \sum_{t=1}^n \bar y_{t-1}^2
    +\frac{2\mu}{1-\rho}\frac1{nl(b_n)} \sum_{t=1}^n \bar y_{t-1}+\frac1{l(b_n)}\frac{\mu^2}{(1-\rho)^2}.
\]
Using these, the rest proof of this lemma follows directly by the law of large numbers.
\end{proof}

%-----------------------------------------------------------%
% Proof of Theorem 1
%-----------------------------------------------------------%
\begin{proof}[Proof of Theorem \ref{th:01}]
For the least squares estimator, it is easy to check that
\[
\begin{pmatrix}
\hat\mu-\mu\\
\hat\rho-\rho
\end{pmatrix}
=
\begin{pmatrix}
\frac{\sum\limits_{t=1}^ny_{t-1}^2\sum\limits_{s=1}^ne_s -\sum\limits_{s=1}^ny_{s-1} \sum\limits_{t=1}^ne_ty_{t-1}}{n\sum\limits_{t=1}^ny_{t-1}^2-\left(\sum\limits_{t=1}^ny_{t-1}\right)^2}\\[4ex]
\frac{n\sum\limits_{t=1}^ny_{t-1}e_t-\sum\limits_{t=1}^ny_{t-1}\sum\limits_{s=1}^ne_s}{n\sum\limits_{t=1}^ny_{t-1}^2 -\left(\sum\limits_{t=1}^ny_{t-1}\right)^2}
\end{pmatrix}.
\]

For convenience, hereafter write
\begin{eqnarray*}
  \Delta_1 &=& \sum_{t=1}^ny_{t-1}^2\sum_{s=1}^ne_s -\sum_{s=1}^ny_{s-1} \sum_{t=1}^ny_{t-1}e_t,\\
  \Delta_2 &=& n\sum_{t=1}^ny_{t-1}e_t-\sum_{t=1}^ny_{t-1}\sum_{s=1}^ne_s,\\
  \Delta_3 &=& n\sum_{t=1}^ny_{t-1}^2-\left(\sum_{t=1}^ny_{t-1}\right)^2.
\end{eqnarray*}
Observe that
\bes
\Delta_1&=&\sum_{t=1}^ny_{t-1}^2\sum_{s=1}^n e_s -\sum_{s=1}^ny_{s-1} \sum_{t=1}^ny_{t-1} e_t\\
&=& \left(\sum_{t=1}^ny_{t-1}^2 - \frac\mu{1-\rho}\sum_{t=1}^n y_{t-1}\right) \sum_{s=1}^n e_s-\sum_{s=1}^ny_{s-1}\sum_{t=1}^n\bar y_{t-1} e_t.
\ens
Hence, by Lemma \ref{lem:01} we have, as $n\to\infty$,
\bes
    \frac{1}{(nl(b_n))^{3/2}} \Delta_1 \overset{d}{\longrightarrow} \frac1{1-\rho^2} W_1 - \frac\mu{\sigma^2(1-\rho)}W_2 I\left(\lim\limits_{m\to \infty} l(b_m) = \sigma^2\right).
\ens
Next, relying on
\bes
\Delta_2&=&n\sum_{t=1}^ny_{t-1}e_t-\sum_{t=1}^ny_{t-1}\sum_{s=1}^ne_s\\
&=& n\sum_{t=1}^n\bar y_{t-1}e_t-\sum_{t=1}^n\bar y_{t-1}\sum_{s=1}^ne_s\\
&=& n\left(\sum_{t=1}^n\bar y_{t-1}e_t\right)\{1+o_p(1)\},
\ens
we obtain
\[\frac1{n^{3/2}l(b_n)} \Delta_2 \overset{d}{\longrightarrow} W_2.\]
Following a similar fashion, we have
\bes
    \frac{1}{n^2l(b_n)}\Delta_3 = \frac{1}{n l(b_n)} \sum_{t=1}^n y_{t-1}^2 - \frac1{l(b_n)} \left(\frac1n\sum_{t=1}^ny_{t-1}\right)^2 \overset{p}{\longrightarrow} \frac{1}{1 - \rho^2}.
\ens
Then this theorem follows immediately by using Slutsky's theorem.
\end{proof}

\begin{proof}[Proof of Lemma \ref{lem:02}]
For the first part, by following a similar fashion to Lemma \ref{lem:01}, we can show that
\[
\begin{pmatrix}
\frac1{\sqrt{nl(b_n)}}\sum\limits_{t=1}^n e_t\\[2ex]
\frac1{\sqrt{l(b_n)}}\sum\limits_{t=1}^n\rho^{-(n-t)}e_t\\[2ex]
\frac\rho{\sqrt{l(b_n)}}\sum\limits_{t=1}^{n-1}\rho^{-t}e_t+\rho y_0
\end{pmatrix} =
\begin{pmatrix}
\frac1{\sqrt{nl(b_n)}}\sum\limits_{t=1}^n e_t^{(1)}\\[2ex]
\frac1{\sqrt{l(b_n)}}\sum\limits_{t=1}^n\rho^{-(n-t)}e_t^{(1)}\\[2ex]
\frac\rho{\sqrt{l(b_n)}}\sum\limits_{t=1}^{n-1}\rho^{-t}e_t^{(1)}+\rho y_0
\end{pmatrix} + o_p(1).\]
The rest proof is similar to \cite{And1959} and \cite{WY2015}. We omit the details.

For the second part, we only prove the case of $\lim\limits_{m\to\infty}l(b_m)=\infty$. Let $\tilde{y}_t^{(k)}=\sum\limits_{i=1}^t \rho^{t-i}e_i^{(k)}+\rho^t y_0$, $k=1,2$, $t = 1, 2, \cdots, n$.
Similar to Lemma \ref{lem:01}, it is easy to verify that
\bes
    \rho^{-n}\{l(b_n)\}^{-1}\sum_{t=1}^n E|\tilde y_{t-1}^{(k)}e_t^{(j)}| \overset{p}{\longrightarrow}  0,\\
    \rho^{-2n}\{l(b_n)\}^{-1}\sum_{t=1}^n E|\tilde y_{t-1}^{(k)}\tilde y_{t-1}^{(j)}| \overset{p}{\longrightarrow} 0,
\ens
for $(k,j) \in \{(1, 2), (2, 1), (2, 2)\}$, as $n\to\infty$, and in turn we can obtain that
\[
\begin{pmatrix}
\rho^{-(n-2)}\{l(b_n)\}^{-1}\sum\limits_{t=1}^n \tilde y_{t-1}e_t\\[2ex]
(\rho^2-1)\rho^{-2(n-1)}\{l(b_n)\}^{-1}\sum\limits_{t=1}^n \tilde y_{t-1}^2
\end{pmatrix} =
\begin{pmatrix}
\rho^{-(n-2)}\{l(b_n)\}^{-1}\sum\limits_{t=1}^n \tilde y_{t-1}^{(1)}e_t^{(1)}\\[2ex]
(\rho^2-1)\rho^{-2(n-1)}\{l(b_n)\}^{-1}\sum\limits_{t=1}^n (\tilde y_{t-1}^{(1)})^2
\end{pmatrix} + o_p(1).\]
Then the conclusion follows.
\end{proof}

\begin{proof}[Proof of Theorem \ref{th:explosive}]
Using the same arguments as \cite{WY2015}, it follows from Lemma \ref{lem:02} that
\bes
\begin{cases}
    \rho^{-(n-1)}\{l(b_n)\}^{-1/2} y_n \overset{d}{\longrightarrow}  U_2+\frac{\mu\rho}{\rho-1},\\[2ex]
    \rho^{-(n-2)}\{l(b_n)\}^{-1}\sum\limits_{t=1}^n y_{t-1}e_t \overset{d}{\longrightarrow} U_1(U_2+\frac{\mu\rho}{\rho-1}),\\[2ex]
    (\rho-1)\rho^{-(n-1)}\{l(b_n)\}^{-1/2}\sum\limits_{t=1}^n y_{t-1} \overset{d}{\longrightarrow}  U_2+\frac{\mu\rho}{\rho-1},\\[2ex]
    (\rho^2-1)\rho^{-2(n-1)}\{l(b_n)\}^{-1}\sum\limits_{t=1}^n y_{t-1}^2 \overset{d}{\longrightarrow}  (U_2+\frac{\mu\rho}{\rho-1})^2.
\end{cases}
\ens
Then as $n\to\infty$, we have
\bes
    \frac1{n^{1/2}\rho^{2n}\{l(b_n)\}^{3/2}}\Delta_1 &=& \rho^{-2n}\{l(b_n)\}^{-1}\sum_{t=1}^n y_{t-1}^2\times\frac1{\sqrt{nl(b_n)}}\sum_{t=1}^n e_t+o_p(1)\\[2ex]
    &\overset{d}{\longrightarrow}& \frac1{\rho^2(\rho^2-1)} W_1 \left(U_2+\frac{\mu\rho}{\rho-1}\right)^2\\[3ex]
    \frac1{n\rho^{n}l(b_n)}\Delta_2&=&\rho^{-n}\{l(b_n)\}^{-1}\sum_{t=1}^n y_{t-1}e_t+o_p(1)\\[2ex]
    &\overset{d}{\longrightarrow}& \frac1{\rho^2}U_1 \left(U_2+\frac{\mu\rho}{\rho-1}\right).
\ens
and
\bes
    \frac1{n\rho^{2n}l(b_n)}\Delta_3 &= &\rho^{-2n}\{l(b_n)\}^{-1}\sum_{t=1}^n y_{t-1}^2+o_p(1)\\[2ex]
    &\overset{d}{\longrightarrow}& \frac1{\rho^2(\rho^2-1)}\left(U_2+\frac{\mu\rho}{\rho-1}\right)^2.
\ens
Then the theorem has been proved.
\end{proof}

\begin{proof}[Proof of Theorem \ref{th:unit}]
Similar to the proof of Theorem \ref{th:01}, by Lemma \ref{lem:unit}, we have, as $n \to \infty$,
\bes
    \Delta_1&=&\sum_{t=1}^ny_{t-1}^2\sum_{t=1}^ne_t -\sum_{t=1}^ny_{t-1} \sum_{t=1}^ny_{t-1}e_t\\
    &=&\mu^2 \left\{\sum_{t=2}^n \left(\sum_{j=0}^{t-2}\rho^j\right)^2\sum_{s=1}^ne_s - \left(\sum_{s=2}^n \sum_{j=0}^{s-2}\rho^j\right) \left(\sum_{t=2}^n\sum_{j=0}^{t-2}\rho^je_t \right) \right\}\{1+o_p(1)\},
\ens
which implies
\[\frac{1}{\sqrt{n^7 l(b_n)}} \Delta_1 \overset{d}{\longrightarrow} \mu^2\left(\widetilde W(1)\int_0^1 J_c^2(s)\,ds-\int_0^1 J_c(s)\,ds \int_0^1 J_{c}(s)\, d\widetilde W(s)\right),\]
and
\bes
\Delta_2&=&n\sum_{t=1}^ny_{t-1}e_t-\sum_{t=1}^ny_{t-1}\sum_{t=1}^ne_t\\
&=&\mu \left\{n\left(\sum_{t=2}^n\sum_{j=0}^{t-2}\rho^je_t\right) - \left(\sum_{t=2}^n \sum_{j=0}^{t-2}\rho^j\right)\sum_{t=1}^ne_t \right\}\{1+o_p(1)\},
\ens
which leads
\[\frac{1}{\sqrt{n^5 l(b_n)}} \Delta_2 \overset{d}{\longrightarrow} \mu\left(\int_0^1 J_{c}(s)\, d\widetilde W(s) - \widetilde W(1)\int_0^1 J_c(s)\,ds\right),\]
and
\bes
\Delta_3&=&n\sum_{t=1}^ny_{t-1}^2- \left(\sum_{t=1}^ny_{t-1}\right)^2\\
&=&\mu^2 n\sum_{t=2}^n \left(\sum_{j=0}^{t-2}\rho^j \right)^2-\mu^2 \left(\sum_{t=2}^n \sum_{j=0}^{t-2}\rho^j \right)^2+o_p(n^4),
\ens
which results in
\[\frac{1}{n^4} \Delta_3 \rightarrow \mu^2\left(\int_0^1 J_c^2(s)\,ds -\left(\int_0^1 J_c(s)\,ds \right)^2\right),\quad n\to\infty.\]
Then the theorem has been proved.
\end{proof}

\begin{proof}[Proof of Lemma \ref{lem:Munit}]
i) Similar to Lemma \ref{lem:01}, under P5), by the Markov inequality and the fact $nl(b_n)\leq b_n^2$, we have for any $\varepsilon > 0$
\begin{eqnarray*}
P\left(\left|\sum_{t=1}^n \rho^{t-1}e_t^{(2)}\right| \geq \sqrt{n^\alpha l(b_n)}\varepsilon\right)
&\leq& \frac{\sum_{t=1}^n \rho^{t-1}E|e_t^{(2)}|}{\sqrt{n^\alpha l(b_n)}\varepsilon}\\
&\leq& 2\frac{\sum_{t=1}^n\rho^{t-1}E[|e_t|I(|e_t|>b_n)]}{\sqrt{n^\alpha l(b_n)}\varepsilon}\\
&=& o\left(\frac{l(b_n)}{b_n}\right)\frac{1 - \rho^{n}}{1-\rho}\frac1{\sqrt{n^\alpha l(b_n)}\varepsilon}\\
&=& o\left(\frac{\sqrt{n^\alpha l(b_n)}}{b_n}\right) \to 0, ~ \text{ as } n \to \infty.
\end{eqnarray*}
This implies that
\[\frac1{\sqrt{n^\alpha}}\sum_{t=1}^n\frac{\rho^{t-1}e_t^{(2)}}{\sqrt{l(b_n)}} \toinp 0 ~ \text{ as } n \to \infty.\]
Similarly we can show that
\[\frac1{\sqrt{n}}\sum_{t=1}^n\frac{e_t^{(2)}}{\sqrt{l(b_n)}} \toinp 0\quad\text{and}
\quad\frac1{\sqrt{n^\alpha}}\sum_{t=1}^n\frac{\rho^{n-t}e_t^{(2)}}{\sqrt{l(b_n)}} \toinp 0,~ n\to\infty.\]
Next, if $(i,j)\in\{(1,2), (2,1), (2,2)\}$, it follows from Lemma A.2 of \cite{HPW2014} that
\[\frac1{\sqrt{n^{1+\alpha}}}\sum_{t=2}^n \left\{\sum_{k=1}^{t-1}\frac{\rho^{t-1-k}e_k^{(i)}}{\sqrt{l(b_n)}} \right\} \frac{e_t^{(j)}}{\sqrt{l(b_n)}} \toinp 0, \quad n\to\infty.\]
We actually obtain
\begin{eqnarray}
\label{eqn:tranc}
  \begin{pmatrix}
    \frac1{\sqrt{n}}\sum\limits_{t=1}^n\frac{e_t}{\sqrt{l(b_n)}}\\
    \frac1{\sqrt{n^\alpha}}\sum\limits_{t=1}^n\frac{\rho^{t-1}e_t}{\sqrt{l(b_n)}}\\
    \frac1{\sqrt{n^\alpha}}\sum\limits_{t=1}^n\frac{\rho^{n-t}e_t}{\sqrt{l(b_n)}}\\
    \frac1{\sqrt{n^{1+\alpha}}}\sum\limits_{t=2}^n \frac{e_t}{\sqrt{l(b_n)}} \sum\limits_{j=1}^{t-1} \frac{\rho^{t-1-j}e_j}{\sqrt{l(b_n)}}
  \end{pmatrix}
  ~=~
  \begin{pmatrix}
    \frac1{\sqrt{n}}\sum\limits_{t=1}^n\frac{e_t^{(1)}}{\sqrt{l(b_n)}}\\
    \frac1{\sqrt{n^\alpha}}\sum\limits_{t=1}^n\frac{\rho^{t-1}e_t^{(1)}}{\sqrt{l(b_n)}}\\
    \frac1{\sqrt{n^\alpha}}\sum\limits_{t=1}^n\frac{\rho^{n-t}e_t^{(1)}}{\sqrt{l(b_n)}}\\
    \frac1{\sqrt{n^{1+\alpha}}}\sum\limits_{t=2}^n \frac{e_t^{(1)}}{\sqrt{l(b_n)}} \sum\limits_{j=1}^{t-1} \frac{\rho^{t-1-j}e_j^{(1)}}{\sqrt{l(b_n)}}
  \end{pmatrix} + o_p(1).
\end{eqnarray}

Then, based on the Cram\'{e}r-Wold device and central limit theorem for martingales differences sequence, the Lindeberg condition for the first part of the right side of \eqref{eqn:tranc} can be proved by using the same arguments as \cite{PM2007} and \cite{HPW2014}. We omit the details.

ii) The proof of the case under P6) is similar to that of i) and \cite{PM2007}, thus is omitted.
\end{proof}

\begin{proof}[Proof of Theorem \ref{th:Munit}]
i) Under P5), observe that $\rho^n = o(n^{-\alpha})$, $y_0 = O_p(1)$, and
\begin{eqnarray*}
    y_t&=& \mu + \rho y_{t-1} + e_t\\
     &=& \frac{\mu}{1 - \rho} + \left(\frac \mu c n^\alpha+y_0 \right)\rho^t+\sum_{i=1}^t\rho^{t-i}e_i,
\end{eqnarray*}
which implies
\begin{eqnarray*}
y_n&=& \left(\frac{\mu}{1 - \rho} + \sum_{i=1}^n\rho^{n-i}e_i\right) \{1 +o_p(1)\}.
\end{eqnarray*}
Note that $y_t-y_{t-1}=\mu+(\rho-1)y_{t-1}+e_t$ and
\[y_t^2-y_{t-1}^2=\mu^2+(\rho^2-1)y_{t-1}^2+e_t^2+2\mu\rho y_{t-1}+2\mu e_t+2\rho y_{t-1}e_t,\]
it is easy to verify that
\begin{eqnarray*}
  \sum_{t=1}^n y_{t-1} &=& \frac{1}{1 - \rho} \Big(n\mu - y_n + \sum_{t=1}^n e_t\Big)\{1 + o_p(1)\},\\
  \sum_{t=1}^n y_{t-1}^2 &=& \frac{1}{1 - \rho^2} \Big(n\mu^2 - y_n^2+y_0^2 + \sum_{t=1}^n e_t^2 + 2\mu\rho \sum_{t=1}^n y_{t-1} + 2\mu \sum_{t=1}^n e_t + 2 \rho \sum_{t=1}^n y_{t-1}e_t\Big)\\
  &=& \frac{1}{1 - \rho^2} \Big(n\mu^2 - y_n^2 + \sum_{t=1}^n e_t^2 + 2\mu\rho \sum_{t=1}^n y_{t-1}\Big)\{1 + o_p(1)\},
\end{eqnarray*}
and
\begin{eqnarray*}
\sum_{t=1}^n y_{t-1}e_t &=& \left\{\sum_{t=1}^n  \left(\frac{\mu}{1 - \rho} + \sum_{i=1}^{t-1}\rho^{t-1-i}e_i\right)e_t\right\}\{1 + o_p(1)\}.
\end{eqnarray*}
Hence,
\begin{eqnarray*}
    \Delta_1& = &\sum\limits_{t = 1}^n y_{t-1}^2 \cdot \sum\limits_{t = 1}^n e_{t}  -\sum\limits_{t = 1}^n y_{t-1} \cdot \sum\limits_{t = 1}^n y_{t-1} e_t\\
    &=& \left\{\frac{1}{1 - \rho^2} \Big(n\mu^2 - y_n^2 + \sum_{t=1}^n e_t^2 + 2\mu\rho \sum_{t=1}^n y_{t-1} + 2\mu \sum_{t=1}^n e_t + 2 \rho \sum_{t=1}^n y_{t-1}e_t\Big) \cdot \sum\limits_{t = 1}^n e_{t} \right.\\
    &&\quad \left. - \frac{1}{1 - \rho} \Big(n\mu - y_n + \sum_{t=1}^n e_t\Big) \cdot \sum\limits_{t = 1}^n y_{t-1} e_t \right\} \{1 + o_p(1)\}\\
    &=&\frac{1}{1 - \rho} \Big\{\Big(\frac{n\mu^2 - y_n^2 + \sum_{t=1}^n e_t^2}{2} + \mu\rho \sum_{t=1}^n y_{t-1} + \mu \sum_{t=1}^n e_t\Big) \cdot \sum\limits_{t = 1}^n e_{t}  - \Big(n\mu - y_n\Big) \times\\
    &&\quad \Big[\frac{\mu}{1 - \rho} \sum_{t=1}^n e_t - \frac{\mu}{1 - \rho} \sum_{t=1}^n \rho^{t-1}e_t + \sum_{t=1}^n \Big(e_t \sum_{j=0}^{t-1} \rho^{t-1-j}e_j\Big)\Big]\Big\}\{1  + o_p(1)\}\\
    &=&\frac{n\mu}{1 - \rho} \Big\{\frac{\mu}{1 - \rho} \sum_{t=1}^n \rho^{t-1}e_t - \sum_{t=1}^n \Big(e_t \sum_{j=0}^{t-1} \rho^{t-1-j}e_j\Big)\Big\}\{1  + o_p(1)\}\\
    &=& \left\{n^{1+5\alpha/2}\sqrt{l(b_n)} \frac{\mu^2}{c^2} \left(\frac{1}{\sqrt{n^\alpha}} \sum_{t=1}^n \rho^{t-1} \frac{e_t}{\sqrt{l(b_n)}}\right) + \right. \\
    && \left. n^{3(1+\alpha)/2}l(b_n) \frac{\mu}{c} \left(\frac{1}{\sqrt{n^{1 + \alpha}}} \sum_{t=1}^n \frac{e_t}{\sqrt{l(b_n)}} \sum_{j=0}^{t-1} \rho^{t-1-j}\frac{e_j}{\sqrt{l(b_n)}}\right)\right\}\{1 + o_p(1)\},
\end{eqnarray*}
and
\begin{eqnarray*}
    \Delta_2&=&-\sum\limits_{t = 1}^n y_{t-1} \cdot \sum\limits_{t = 1}^n e_{t} + n \sum\limits_{t = 1}^n y_{t-1} e_t\\
    &=&-\frac{1}{1 - \rho} \Big(n\mu - y_n + \sum_{t=1}^n e_t\Big) \cdot \sum\limits_{t = 1}^n e_{t} + n \Big[\frac{\mu}{1 - \rho} \sum_{t=1}^n e_t\\
    && - \frac{\mu}{1 - \rho} \sum_{t=1}^n \rho^{t-1}e_t + \sum_{t=1}^n \Big(e_t \sum_{j=0}^{t-1} \rho^{t-1-j}e_j\Big)\Big]\\
    &=& \left\{n^{1+3\alpha/2} \sqrt{l(b_n)} \cdot \frac{\mu}{c} \cdot \left(\frac{1}{\sqrt{n^{\alpha}}} \sum_{t=1}^n \rho^{t-1} \frac{e_t}{\sqrt{l(b_n)}} \right) \right. \\
    && + \left. n^{3/2+\alpha/2} l(b_n) \left(\frac{1}{\sqrt{n^{1+\alpha}}} \sum_{t=1}^n\frac{e_t}{\sqrt{l(b_n)}} \sum_{j=0}^{t-1} \rho^{t-1-j}\frac{e_j}{\sqrt{l(b_n)}}\right)\right\}\{1 + o_p(1)\},
\end{eqnarray*}
and
\begin{eqnarray*}
    \Delta_3 & =& n\sum\limits_{t=1}^n y_{t-1}^2 - \Big(\sum\limits_{t=1}^n y_{t-1}\Big)^2\\
    & = &\frac{n}{1 - \rho}\Big\{\frac{1}{1+\rho} \Big[\frac{2 n\mu^2 \rho}{1 - \rho} + \Big(n\mu^2 + \sum_{t=1}^n e_t^2\Big) - y_n^2 - \frac{2\mu\rho}{1 - \rho} y_n + \frac{4\rho \mu}{1 - \rho} \sum_{t=1}^n e_t\Big] \\
    && - \frac{1}{1 - \rho} \Big[n\mu^2 - 2 \mu y_n + 2\mu \sum_{t=1}^n e_t\Big]\Big\}\{1  + o_p(1)\}\\
    & = &\frac{n}{1 - \rho}\Big\{\frac{1}{1+\rho} \Big[\sum_{t=1}^n e_t^2 - y_n^2 - \frac{2\mu\rho}{1 - \rho} y_n\Big] + \frac{2 \mu y_n}{1 - \rho}\Big\}\{1  + o_p(1)\}\\
    & = &\left\{\frac{n^{2+\alpha}l(b_n)}{-2c} \left(\frac{1}{n} \sum_{t=1}^n \frac{e_t^2}{l(b_n)}\right) + n^{1+3\alpha} \frac{\mu^2}{-2c^3}\right\}\{1 + o_p(1)\}.
\end{eqnarray*}
These, together with Lemma \ref{lem:Munit}, lead directly to i).

ii)Under P6), it follows from Lemma \ref{lem:Munit} that
\begin{eqnarray*}
  y_n= \frac\mu c n^\alpha\rho^n+n^{\alpha/2}\rho^n\sqrt{l(b_n)}V_{22} \{1+o_p(1)\},~n \to \infty,
\end{eqnarray*}
which implies that
\begin{eqnarray*}
  y_n^2= \frac{\mu^2}{ c^2} n^{2\alpha}\rho^{2n}+2\frac\mu cn^{3\alpha/2}\rho^{2n}\sqrt{l(b_n)}V_{22}\{1+o_p(1)\}
\end{eqnarray*}
and
\begin{eqnarray*}
  \sum_{t=1}^n y_{t-1}&=&\frac1c n^\alpha y_n-\frac1c n^\alpha y_0-\frac\mu c n^{\alpha+1}-\frac1c n^\alpha\sum_{t=1}^n e_t\\
  &=&\frac\mu{c^2}n^{2\alpha}\rho^n+\frac1c n^{3\alpha/2}\rho^n\sqrt{l(b_n)}V_{22}\{1+o_p(1)\}.
\end{eqnarray*}
Again by Lemma \ref{lem:Munit}, we can show that
\begin{eqnarray*}
\sum_{t=1}^ny_{t-1}e_t&=&\sum_{t=1}^n \left(-\frac\mu c n^\alpha+\frac\mu c n^\alpha\rho^{t-1}+y_0\rho^{t-1}+\sum_{i=1}^{t-1}\rho^{t-1-i}e_i\right) e_t\\
&=&-\frac\mu cn^\alpha\sum_{t=1}^ne_t+\frac\mu c n^\alpha\sum_{t=1}^n\rho^{t-1}e_t+y_0\sum_{t=1}^n\rho^{t-1}e_t +\sum_{t=1}^n\left(\sum_{i=1}^{t-1}\rho^{t-1-i}e_i\right)e_t\\
&=&\frac\mu cn^{3\alpha/2}\rho^n\sqrt{l(b_n)}V_{23}\{1+o_p(1)\},
\end{eqnarray*}
and
\begin{eqnarray*}
\sum_{t=1}^ny_{t-1}^2=\frac{\mu^2}{2c^3}n^{3\alpha}\rho^{2n}+\frac\mu{c^2}n^{5\alpha/2}\rho^{2n}\sqrt{l(b_n)}V_{22} \{1+o_p(1)\}.
\end{eqnarray*}
Then as $n\to\infty$, we have
\begin{eqnarray*}
  \Delta_1 &=& \frac{\mu^2}{2c^3}n^{3\alpha+1/2}\rho^{2n}\sqrt{l(b_n)}V_{21}\{1+o_p(1)\},\\
  \Delta_2 &=& \frac\mu cn^{3\alpha/2+1}\rho^n\sqrt{l(b_n)}V_{23} \{1+o_p(1)\},\\
  \Delta_3 &=& \frac{\mu^2}{2c^3}n^{3\alpha+1}\rho^{2n}\{1+o_p(1)\}.
\end{eqnarray*}
Therefore, the result holds.
\end{proof}

\section{Concluding remarks}

In this paper, we investigated the limit distribution of the least squares estimator of $(\mu, \rho)$ for the first-order autoregression model whit $\mu \neq 0$. The discussions were took under the assumption that the error variance may be infinite. The existing results fail to hold under this assumption. Our results show that the possible infinite variance affects the convergence rate of the estimator of the intercept in all cases, but only in some cases for the correlation coefficient; see Sections 3.3 and 3.4 for details. Based on the current results, one could build some testing procedures, e.g., $t$-statistics. However, their limit distributions may be quite complex because the least squares estimator has a different limit distribution in different cases, and even is degenerated in the moderate deviations from a unit root cases. Hence, it is interesting to construct some uniform statistical inferential procedures, e.g., confidence region for $(\mu, \rho)^\top$, which are robust to all cases above. Nevertheless, this topic is beyond the scope of the current paper, and will be pursued in the future.

\section*{Acknowledgements}

Xiaohui Liu's research was supported by NSF of China (Grant No.11601197, 11461029), China Postdoctoral Science Foundation funded project (2016M600511, 2017T100475), the Postdoctoral Research Project of Jiangxi (2017KY10),  NSF of Jiangxi Province (No.20171ACB21030).

\bigskip


\begin{thebibliography}{44}
    %\footnotesize \vskip 0.1 in\setlength{\itemsep}{0.03in}
    %\bibitem[Bai and He(1999)]{BH1999} Bai, Z.D., He, X.M., (1999). Asymptotic distributions of the maximal depth estimators for regression and multivariate location. \textit{Ann. Statist.}, 27: 1617-1637.
\bibitem[Andrews and Guggenberger(2009)]{AG2009} Andrews, D.W.K. and Guggenberger, P. (2009). Hybrid and size-corrected subsampling methods. {\em Econometrica} {\bf 77}, 721--762.

\bibitem[Andrews and Guggenberger(2014)]{AG2014} Andrews, D.W.K. and Guggenberger, P. (2014). A conditional-heteroskedasticity-robust confidence interval for the autoregressive parameter. {\em The Review of Economics and Statistics} {\bf 96}, 376--381.


\bibitem[Anderson(1959)]{And1959} Anderson, T.W. (1959). On asymptotic distributions of estimates of parameters of stochastic difference equations. {\em The Annals of Mathematical Statistics 30, 676--687.}

\bibitem[Chan et al.(2012)]{CLP2012} Chan, N.H., Li, D. and Peng, L. (2012). Toward a unified interval estimation of autoregressions. \emph{Econometric Theory} \textbf{28}, 705--717.

\bibitem[Cavaliere and Taylor(2009)]{CT2009}
Cavaliere, G. and Taylor, A.M.R. (2009). Heteroskedastic time series with a unit root. {\em Econometric Theory} 25: 1228--1276.

\bibitem[Chan and Wei(1987)]{CW1987} Chan, N.H. and Wei, C.Z. (1987). Asymptotic inference for nearly nonstationary AR(1) processes. {\em The Annals of Statistics 15, 1050--1063.}

\bibitem[Cs\"{o}rg\H{o} et al.(2003)]{CSW2003} Cs\"{o}rg\H{o}, M., Szyszkowicz, B., and Wang, Q.Y., (2003). Donsker¡¯s theorem for self-normalized partial sums processes. {\em The Annals of Probability 31, 1228--1240.}

\bibitem[Dickey and Fuller(1981)]{DF1981} Dickey, D.A. and Fuller, W.A. (1981). Likelihood ratio statistics for autoregressive time series with a unit root. {\em Econometrica} {\bf 49}, 1057--1072.

\bibitem[Dios-Palomares and Roldan(2006)]{DR2006} Dios-Palomares, R. and Roldan, J.A. (2006). A strategy for testing the unit root in AR(1) model with intercept: a Monte Carlo experiment. {\em Journal of Statistical Planning and Inference} {\bf 136}, 2685--2705.

\bibitem[Embrechts et al.(1997)]{EKM1997} Embrechts, P., Kl\"{u}ppelberg, K., and Mikosch, T., (1997). Modelling Extremal Events: for Insurance and Finance. {\em Springer.}

\bibitem[Fei(2018)]{Fei2018} Fei, Y., (2018). Limit theory for mildly interated process with intercept, \emph{Economics Letters} {\bf 163} 98--101.

\bibitem[Gin\'{e} et al.(1997)]{GGM1997} Gin\'{e}, E., G\"{o}tze, F., and Mason, D.M. (1997). When is the student t-statistic asymptotically standard normal? {\em The Annals of Probability 25, 1514--1531.}

\bibitem[Hill et al.(2016)]{HLP2016} Hill, J.B., Li, D. and Peng, L. (2016). Uniform Interval Estimation for an AR(1) Process with AR Errors. {\em Statistica Sinica} {\bf 26}, 119--136.

\bibitem[Huang et al.(2014)]{HPW2014} Huang, S.H., Pang, T.X., and Weng, C. (2014). Limit theory for moderate deviations from a unit root under innovations with a possibly infinite variance. {\em Methodology \& Computing in Applied Probability 16, 187--206.}

\bibitem[Liu and Peng(2017)]{LP2017} Liu, X., and Peng, L., (2017). Asymptotic Theory And Uniform Confidence Region  For An   Autoregressive Model. \emph{Technical report.}

\bibitem[Mikusheva(2007)]{Mik2007} Mikusheva, A. (2007). Uniform inference in autoregressive models. \emph{Econometrica} 75, 1411--1452.

\bibitem[Phillips(1987)]{Phi1987} Phillips, P.C.B. (1987). Towards a unified asymptotic theory for autoregression. \emph{Biometrika} 74, 535-547.

\bibitem[Phillips(1990)]{Phi1990} Phillips, P.C.B. (1990). Time series regression with a unit root and infinite variance errors. {\em Econometric Theory} 6: 44--62.

\bibitem[Phillips and Magdalinos(2007)]{PM2007} Phillips, P.C.B., and Magdalinos, T. (2007). Limit theory for moderate deviations from a unit root. {\em Journal of Econometrics 136, 115--130.}

\bibitem[So and Shin(2007)]{SS1999} So, B.S. and Shin, D.W. (1999). Cauchy estimators for autoregressive processes with applications to unit root tests and confidence intervals. \emph{Econometric Theory} \textbf{15}, 165--176.

\bibitem[Wang and Yu(2015)]{WY2015} Wang, X., and Yu, J., (2015). Limit theory for an explosive autoregressive process. {\em Economics Letters 126, 176--180.}
\end{thebibliography}
\end{document}